\numberwithin{equation}{section}
\newtheorem{theo}{Theorem}[section]
\newtheorem{prop}[theo]{Proposition}
\newtheorem{lemm}[theo]{Lemma}
\newtheorem{cor}[theo]{Corollary}
\theoremstyle{definition}
\newtheorem{defi}[theo]{Definition}
\theoremstyle{remark}
\newtheorem{rem}[theo]{Remark}
\theoremstyle{definition}
\newtheorem{ex}[theo]{Example}
\newcommand{\ddbar}{dd^c}
\newcommand{\OX}{\mathcal{O}_{X}}
\newcommand{\OV}{\mathcal{O}_{V}}
\newcommand{\ac}{\mathrm{ac}}
\newcommand{\reg}{\mathrm{reg}}
\newcommand{\Psh}[1]{ {\rm{Psh}}(#1, \theta | _{#1}) }
\newcommand{\PshX}{ {\rm{Psh}}(X, \theta ) }
\newcommand{\V}[1]{ {\rm{vol}}_{\it{#1}} }
\begin{document}

\title[]
{An ampleness criterion with the extendability\\ of 
singular positive metrics.}
\author{SHIN-ICHI MATSUMURA}

\address{Graduate School of Mathematical Sciences, University of Tokyo, 3-8-1 Komaba,
Tokyo, 153-8914, Japan.}

 \email{{\tt
shinichi@ms.u-tokyo.ac.jp, 
shinichi@sci.kagoshima-u.ac.jp, 
mshinichi0@gmail.com
}}

\thanks{Classification AMS 2010: 32U05, 32C25, 32U40. }

\keywords{Psh functions, Singular metrics, 
Currents, ample line bundles. }

\maketitle

\begin{abstract}
Coman, Guedj and Zeriahi proved that, 
for an ample line bundle $L$ 
on a projective manifold $X$, 
any singular positive metric 
on the line bundle $L|_{V}$ 
along a subvariety $V \subset X$ 
can be extended 
to a global singular positive metric on $L$. 
In this paper, 
we prove that the extendability of singular positive metrics 
on a line bundle along a subvariety implies  
the ampleness of the line bundle.
\end{abstract}

\section{Introduction}
Throughout this paper, let us denote by $X$ a projective manifold of dimension $n$,    
by $L$ a holomorphic line bundle on $X$. 
In the theory of several complex variables and algebraic geometry, 
it is fundamental to consider a singular metric on $L$ whose  
Chern curvature is a positive $(1,1)$-current.
A singular metric on $L$ with positive curvature current 
corresponds to a $\theta $-plurisubharmonic 
function, where 
$\theta $ is a smooth $d$-closed $(1,1)$-form 
which represents the first Chern class $c_{1}(L)$
of the line bundle $L$. 
(For simplicity of notation, we will abbreviate  
\lq \lq $\theta $-plurisubharmonic" 
to \lq \lq $\theta $-psh".) 
Here a function 
$\varphi :X \longrightarrow [-\infty,\infty )$ is called
a {\it{$\theta$-psh function}}, 
if $\varphi$ is upper semi-continuous on $X$ and the Levi form 
$\theta +\ddbar \varphi $ is a positive $(1,1)$-current.
We will denote by $\PshX$ the set of $\theta$-psh functions on $X$.
That is, 
\begin{equation*}
\PshX : = \big\{\varphi :X \longrightarrow [-\infty,\infty )\  
\big| 
\ \varphi \ {\text{ is upper semi-continuous and } }
\theta +\ddbar \varphi \geq 0.\ 
\big\}.
\end{equation*}

It is of interest to know 
when a $\theta|_{V} $-psh function 
on a (closed) subvariety $V \subseteq X$ can be extended to 
a global $\theta $-psh function on $X$.
Coman, Guedj and Zeriahi proved that 
a $\theta |_{V}$-psh function 
on any subvariety $V$ can be extended 
a global $\theta $-psh function on $X$, 
if $L$ is an ample line bundle (see \cite[Theorem B]{CGZ}).
Note that a $\theta| _{V}$-psh function can be defined
even if $V$ has singularities 
(see \cite[Section 2]{CGZ} for the precise definition).

\begin{theo}[{\cite[Theorem B]{CGZ}}] \label{CGZ}
Let $L$ be an ample line bundle over a projective manifold 
$X$ and let $\theta $ be a K\"ahler form representing 
$c_{1}(L)$. Then for any subvariety $V \subseteq X$, 
any $\theta |_{V} $-psh function on $V$ extends to 
a $\theta$-psh function on $X$.  
\end{theo}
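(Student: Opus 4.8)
The plan is to produce the extension as a limit of Bergman‑kernel weights, transporting sections from $V$ to $X$ by the Ohsawa--Takegoshi extension theorem, which is available here precisely because $L$ is ample. I start with reductions. Since any two K\"ahler forms in $c_{1}(L)$ differ by $dd^{c}$ of a smooth function, and adding that function to a psh function is compatible with both restriction to $V$ and extension to $X$, one may replace $\theta$ by any convenient K\"ahler representative (say the restriction to $X$ of a Fubini--Study form for a projective embedding by a large power of $L$). Next, truncating $u$ to $\max(u,-j)$, applying a regularization theorem for $\theta|_{V}$-psh functions (in the spirit of Demailly) to get continuous approximants decreasing to $u$, and using that the maximal $\theta$-psh extension $\bigl(\sup\{v\in\PshX:\ v|_{V}\le u'\}\bigr)^{*}$ is monotone nondecreasing in $u'$, one reduces to the case where $u$ is \emph{continuous} (hence bounded, $V$ being compact). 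The remaining content is to produce, for such $u$, a single $\theta$-psh function on $X$ restricting to $u$ on $V_{\reg}$.

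So assume $u$ continuous. Fix a smooth metric $h_{0}=e^{-\varphi_{0}}$ on $L$ with $dd^{c}\varphi_{0}=\theta$. For $m\gg 0$ the bundle $mL$ is very ample and, by Serre vanishing, $H^{0}(X,mL)\to H^{0}(V,mL|_{V})$ is surjective. Equip $H^{0}(V,mL|_{V})$ with the inner product $\langle\sigma,\tau\rangle_{m}:=\int_{V_{\reg}}\langle\sigma,\tau\rangle_{h_{0}^{m}}\,e^{-mu}\,dV$ (finite since $u$ is bounded), let $(\sigma_{k})$ be an orthonormal basis and set $B_{m}:=\sum_{k}|\sigma_{k}|^{2}_{h_{0}^{m}}$, the Bergman kernel along the diagonal. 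Extending each $\sigma_{k}$ to $s_{k}\in H^{0}(X,mL)$ with controlled $L^{2}$-norm by the Ohsawa--Takegoshi(--Manivel) theorem, put
\begin{equation*}
u_{m}:=\frac{1}{m}\log\sum_{k}|s_{k}|^{2}_{h_{0}^{m}}\ \in\ \PshX ,
\end{equation*}
which is $\theta$-psh because $dd^{c}\log\sum_{k}|s_{k}|^{2}\ge 0$, and the $L^{2}$-control makes $\{u_{m}\}$ uniformly bounded above on $X$. Since $s_{k}|_{V}=\sigma_{k}$ we have $u_{m}|_{V}=\frac{1}{m}\log B_{m}$, and the heart of the matter is the weighted Bergman asymptotics $\frac{1}{m}\log B_{m}\to u$, locally uniformly on $V_{\reg}$: the bound $\le u+o(1)$ comes from the sub-mean-value inequality applied to $|\sigma|^{2}_{h_{0}^{m}}e^{-mv}$ for a smooth $v\ge u$ — this is psh modulo a smooth error of size $O(m)$, absorbed in the limit $m\to\infty$, then $v\downarrow u$ — while $\ge u-o(1)$ is obtained by constructing, at each point of $V_{\reg}$, an $L^{2}$-normalized section of $mL|_{V}$ peaking there against the weight $e^{-mu}$ (using $L^{2}$ estimates on $V$ and the continuity of $u$). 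Setting $\varepsilon_{m}:=\sup_{V_{\reg}}\bigl(\frac{1}{m}\log B_{m}-u\bigr)_{+}$, the function $\tilde u:=\bigl(\sup_{m}(u_{m}-\varepsilon_{m})\bigr)^{*}\in\PshX$ satisfies $\tilde u|_{V_{\reg}}\ge u$ everywhere and $\tilde u|_{V_{\reg}}\le u$ off a pluripolar set; since a $\theta$-psh function cannot be propped up on a pluripolar set and $u$ is upper semicontinuous, $\tilde u|_{V_{\reg}}=u$, so $\tilde u$ is the sought extension.

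I expect the main obstacle to be everything attached to the singular locus $V_{\mathrm{sing}}$: one needs the regularization theorem and the Ohsawa--Takegoshi extension to hold for the possibly singular $V$ — equivalently, to pass to a resolution of $V$ and keep careful track of how the current of integration along $V$, the weight $e^{-mu}$ and the $L^{2}$-norms transform, and then transport the construction back to $V$. Relatedly, the Bergman estimates above are uniform only on compact subsets of $V_{\reg}$, so one must still control the overshoot $\varepsilon_{m}$ near $V_{\mathrm{sing}}$ and make sure $\tilde u$ restricts to $u$ \emph{exactly}, rather than to something merely $\ge u$ or equal only almost everywhere. By contrast, the reductions of the first paragraph, the fact that $u_{m}\in\PshX$, the uniform upper bound on $\{u_{m}\}$, and the formal passage to the limit are routine.
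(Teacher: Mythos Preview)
This theorem is not proved in the present paper at all: it is quoted verbatim from \cite{CGZ} and serves only as background for the converse implication, which is the paper's actual contribution (Theorem~\ref{Main}). There is therefore no proof here to compare your proposal against.

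For what it is worth, your sketch is in the spirit of the original argument in \cite{CGZ}: one first reduces to bounded (or continuous) $u$ by truncation, regularization, and an envelope construction, and then manufactures global $\theta$-psh approximants via Ohsawa--Takegoshi extension of sections of $mL$ from $V$ to $X$, using Bergman-kernel asymptotics to see that the restrictions converge back to $u$. The concerns you flag yourself are exactly the genuine ones: the regularization step on a possibly singular $V$, the Ohsawa--Takegoshi estimates when $V$ is singular (handled in \cite{CGZ} by passing to a resolution and tracking the Jacobian), and the endgame showing that the upper-semicontinuous regularization of $\sup_{m}(u_{m}-\varepsilon_{m})$ restricts to $u$ \emph{exactly} on all of $V$, not merely on $V_{\reg}$ or off a pluripolar set. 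None of this is addressed---or needed---in the paper under review.
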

The following theorem asserts that 
the converse implication of Theorem \ref{CGZ} holds, 
which is a main result in this paper. 
Theorem \ref{Main} gives an ampleness criterion by the 
extendability of singular metrics ($\theta $-psh functions).

\begin{theo}\label{Main}
Let $L$ be a pseudo-effective line bundle whose 
first Chern class $c_{1}(L)$ is not zero. 
Assume that $L$ satisfies the following property\ $:$ 
For any subvariety $V$ and 
any $\theta |_{V}$-psh function
$\varphi \in \Psh{V}$, 
there exists a global $\theta $-psh function 
$\widetilde{\varphi} \in \PshX$ such that
$\widetilde{\varphi}|_{V} = \varphi$.
Here $\theta $ is a smooth $d$-closed $(1,1)$-form 
representing $c_{1}(L)$.
Then $L$ is an ample line bundle.
\end{theo}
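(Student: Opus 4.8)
I will prove the contrapositive: if $L$ is not ample, then some subvariety $V$ carries a $\theta|_V$-psh function admitting no $\theta$-psh extension to $X$. The recurring mechanism is that an extension $\widetilde\varphi$ of $\varphi$ produces a closed positive current $T=\theta+\ddbar\widetilde\varphi\in c_1(L)$ for which $\theta|_V+\ddbar(\widetilde\varphi|_V)$ is prescribed equal to $\theta|_V+\ddbar\varphi$, and $\varphi$ is designed so that no such current can exist.

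\emph{Reduction to $L$ nef.} Suppose $L$ is not nef. Its divisorial Zariski (Nakayama) decomposition $c_1(L)=P+\sum a_iN_i$ has some $a_i>0$, and then $T\ge a_i[N_i]$ for every closed positive $T\in c_1(L)$. Fix a very ample $A$ and a general complete-intersection curve $C$ cut out by members of $|mA|$: then $C$ is smooth, is not contained in $\bigcup_jN_j$, meets $N_i$, and $L\cdot C=m^{n-1}(L\cdot A^{n-1})>0$ since $c_1(L)\ne 0$ forces $L\cdot A^{n-1}>0$ for the psef class $c_1(L)$. Take $p\in C\setminus\bigcup_jN_j$ and $\varphi\in\Psh{C}$ with $\theta|_C+\ddbar\varphi=(L\cdot C)\,\delta_p$. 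An extension $\widetilde\varphi$ gives $T\in c_1(L)$ with $\theta|_C+\ddbar(\widetilde\varphi|_C)=(L\cdot C)\,\delta_p$; restricting $T-a_i[N_i]\ge 0$ to $C$ (legitimate, since the relevant potential is not $\equiv-\infty$ on $C$) forces $\theta|_C+\ddbar(\widetilde\varphi|_C)\ge a_i[N_i\cap C]$, the right side having positive mass at the points of $N_i\cap C$, all distinct from $p$ — impossible. Hence $L$ is nef.

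\emph{Nakai--Moishezon and induction.} Assume $L$ is nef but not ample, and induct on $n=\dim X$ (the case $n=1$ being trivial, a non-trivial psef line bundle on a curve having positive degree). By Nakai--Moishezon choose an irreducible $V\subseteq X$ with $(L^{\dim V}\cdot V)=0$ of minimal dimension $d\ge1$. If $2\le d<n$, minimality forces $L|_V$ to be strictly positive on all proper subvarieties of $V$, so $c_1(L|_V)\ne 0$; moreover $L|_V$ is nef and (passing to a resolution of $V$, restricting global extensions and descending along exceptional fibers) inherits the extension property, so the inductive hypothesis gives that $L|_V$ is ample — contradicting $\mathrm{vol}(L|_V)=(L^d\cdot V)=0$. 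Thus either $d=1$, i.e. there is an irreducible curve $C_0$ with $L\cdot C_0=0$, or $d=n$, i.e. $(L^n)=0$ while $(L^{\dim W}\cdot W)>0$ for every proper $W$ (so $L$ is strictly nef).

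\emph{The base cases.} Both rely on the following construction, whose key analytic input I expect to be the crux: if $h$ is $\theta$-psh on $X$, $C$ is smooth at a point $q$, and $h|_C$ has a logarithmic pole at $q$, then $\nu(h,q)>0$ — that is, restriction to a smooth curve cannot create a pole out of a current of vanishing Lelong number. Granting this, pick a smooth point $q$, a curve $C_1$ smooth at $q$ with $\ell:=L\cdot C_1>0$ (arranged to pass through a second point $q'$ of $C_0$ when $d=1$), and $\varphi\in\Psh{C_1}$ with $\theta|_{C_1}+\ddbar\varphi=\ell\,\delta_q$; an extension $\widetilde\varphi$ gives $T\in c_1(L)$ with $\theta|_{C_1}+\ddbar(\widetilde\varphi|_{C_1})=\ell\,\delta_q$, hence $\nu(\widetilde\varphi,q)>0$ by the input above. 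When $d=1$ this forces a positive atom at $q$ in the measure $\theta|_{C_0}+\ddbar(\widetilde\varphi|_{C_0})$ (which is defined, since $\widetilde\varphi(q')$ is finite), whose total mass equals $L\cdot C_0=0$ — a contradiction. When $d=n$ one must instead show that no closed positive current in $c_1(L)$ has a positive Lelong number at any point; I would deduce this by combining the divisorial Zariski decomposition with the positivity $(L^{\dim W}\cdot W)>0$ on proper $W$ to rule out divisorial Lelong numbers, and a blow-up argument of Demailly type (exploiting $\mathrm{vol}(L)=0$) to rule out the remaining ones. This last point, together with the comparison of Lelong numbers, is where the real work lies; once it is in place, Nakai--Moishezon gives that $L$ is ample.
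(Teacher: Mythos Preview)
Your argument has several genuine gaps, the most serious being that your ``key analytic input'' is false. You claim that if $h$ is $\theta$-psh on $X$, $C$ is a curve smooth at $q$, and $h|_C$ has a logarithmic pole at $q$, then $\nu(h,q)>0$. But Lelong numbers only \emph{increase} under restriction, and the gap can be as large as you like. Concretely, on the bidisc take
\[
\Phi(z_1,z_2)=\max\bigl(\log|z_1|,\,-\log(-\log|z_2|)\bigr),
\]
a maximum of two psh functions, hence psh. Its restriction to $C=\{z_2=0\}$ is $\log|z_1|$, with Lelong number $1$ at the origin; yet approaching the origin along $z_1=0$ gives $\Phi(0,z_2)=-\log\log(1/|z_2|)$, so $\sup_{|z|=r}\Phi\ge -\log\log(1/r)$ and $\nu(\Phi,0)=0$. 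Thus an extension $\widetilde\varphi$ of your $\varphi\in\Psh{C_1}$ with $\theta|_{C_1}+\ddbar\varphi=\ell\,\delta_q$ need not have any Lelong number at $q$, and both of your base cases ($d=1$ and $d=n$) collapse. This is precisely the difficulty the paper flags: the extension can be much more singular than the datum, and one cannot read off pointwise information about $\widetilde\varphi$ from $\varphi$.

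Two further structural problems: your reduction to the nef case assumes that a pseudo-effective, non-nef class has nonzero negative part in its divisorial Zariski decomposition, but in dimension $\ge 3$ the positive part is only \emph{movable}, not nef, so $L$ may be movable and non-nef with all $a_i=0$; and your $d=n$ case is, as you acknowledge, only a sketch resting on a further unproved claim (no positive Lelong numbers for currents in a strictly nef class of volume zero) that is itself delicate.

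The paper proceeds quite differently and avoids Lelong numbers entirely. It first shows directly that $L|_V$ is pseudo-effective and not numerically trivial for every $V$ (hence $L$ is nef), and then for each $V$ it manufactures, via the extension hypothesis together with Demailly's regularization and a gluing with $A|z|^2-A$ on a small ball, an almost psh function on $V$ whose Levi form is \emph{smooth and strictly positive} on a fixed neighbourhood $U$ of a chosen point, with only an $\varepsilon$-loss of global positivity. Boucksom's inequality $\V{V}(L+\varepsilon B)\ge\int_V(\theta|_V+\varepsilon\omega+\ddbar\Psi_\varepsilon)_{\mathrm{ac}}^d$ then gives a uniform lower bound $\V{V}(L)\ge\int_U(\theta|_V+A\,\ddbar|z|^2)^d>0$, and Nakai--Moishezon finishes. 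The point is that strict positivity on a fixed open set survives the limit $\varepsilon\to 0$, whereas pointwise singularity information of the kind you try to exploit does not.
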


A line bundle $L$ is called {\it{pseudo-effective}} if 
$\PshX$ is not empty. 
We can easily check that the definition of pseudo-effective line bundles 
does not depend on the choice of $\theta \in c_{1}(L)$. 
In the proof of Theorem \ref{Main}, we 
consider only the case 
when $V$ is a strongly movable curve (see section 3).
Here a curve $C$ is called a strongly movable curve if 
\begin{equation*}
C=\mu _{*} \big( A_{1} \cap \dots \cap A_{n-1} \big)
\end{equation*}
for suitable very ample divisors $A_{i}$ on $\tilde X$, 
where $\mu :\tilde X \to X$ is a birational morphism. 
See \cite[Definition 1.3]{BDPP} for more details. 
Thus for an ampleness criterion, 
it is sufficient to check the extendability 
from a strongly movable curve.

It is important to emphasize that 
even if a given $\theta |_{V}$-psh function
$\varphi$ is smooth at some point on $V$, 
the extended function $\widetilde{\varphi}$ 
may not be smooth at the point.
The fact seems to make the proof of Theorem \ref{Main} 
difficult. 

Remark that the assumption that 
the first Chern class $c_{1}(L)$ 
is not zero is necessary.
Indeed,  
when the first Chern class $c_{1}(L)$ is zero and 
$\theta $ is equal to zero as a $(1,1)$-form, 
a $\theta $-psh function is always constant, 
from the maximum principle of psh functions.
Hence any $\theta |_{V}$-psh function can be extended.
However $L$ is not an ample line bundle.
In other words, a line bundle which satisfies 
the extendability condition in Theorem \ref{Main} 
must be ample or numerically trivial 
(that is, $c_{1}(L)$ is zero).

This paper is organized in the following way: 
In section 2, we collect materials 
to prove Theorem \ref{Main}.
Section 3 is devoted to give the proof of Theorem \ref{Main}.
In section 4, there are two examples which 
give ideas for the proof of Theorem \ref{Main}.

\subsection*{Acknowledgment}
The author would like to thank 
his supervisor Professor Shigeharu Takayama 
for carefully reading a preliminary version of this article and 
useful comments.
He is indebted to Dr. 
Tomoyuki Hisamoto who suggests him to consider  
the converse implication of Theorem \ref{CGZ}.
He wishes to thank Professor Ahmed Zeriahi
for warm encouragement and comments.
He obtained an opportunity to discuss the problem 
with Professor Ahmed Zeriahi 
when he attended the conference 
\lq \lq Complex and Riemannian Geometry" at CIRM. 
He is grateful to the organizers.
He is supported by the Grant-in-Aid for Scientific Research 
(KAKENHI No. 23-7228)
and the Grant-in-Aid for JSPS fellows.

\section{Preliminaries}
In this section, we collect materials for the proof of 
Theorem \ref{Main}.
The propositions in this section may be known facts. 
However we give comments or references 
for the readers' convenience.

\begin{lemm}\label{inter}
Let $L$ be a pseudo-effective line bundle 
whose first Chern class $c_{1}(L)$ is not zero.
Then the intersection number $(L \cdot B^{n-1})$ is positive
for any ample line bundle $B$ on $X$.
\end{lemm}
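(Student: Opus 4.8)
The plan is to realize $(L \cdot B^{n-1})$ as the total mass of a positive measure attached to a positive curvature current, and then to observe that this mass vanishes only if $c_{1}(L)$ is itself zero in de Rham cohomology. Since $L$ is pseudo-effective, $\PshX$ is nonempty; I fix $\varphi \in \PshX$ with $\varphi \not\equiv -\infty$ and set $T := \theta + \ddbar \varphi$, a closed positive $(1,1)$-current whose real de Rham class is $c_{1}(L)$. Let $B$ be an ample line bundle and choose a K\"ahler form $\omega$ representing $c_{1}(B)$. As $\omega^{n-1}$ is $d$-closed and $\varphi \in L^{1}(X)$, Stokes' theorem gives $\int_{X} \ddbar\varphi \wedge \omega^{n-1} = 0$, so that
\begin{equation*}
(L \cdot B^{n-1}) = \int_{X} \theta \wedge \omega^{n-1} = \int_{X} T \wedge \omega^{n-1}.
\end{equation*}
The last integral is $\geq 0$, because $T \wedge \omega^{n-1}$ is a positive measure, being the wedge of a positive $(1,1)$-current with a strongly positive smooth $(n-1,n-1)$-form.

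It remains to exclude the value $0$. Assume $\int_{X} T \wedge \omega^{n-1} = 0$; then the positive measure $\mu := T \wedge \omega^{n-1}$ has total mass $0$, hence $\mu = 0$. I would now argue locally on a coordinate ball, where $\theta = \ddbar g$ for some smooth $g$, so that $u := g + \varphi$ is plurisubharmonic and $T = \ddbar u$. There $\ddbar u \wedge \omega^{n-1}$ is a positive multiple of $(\operatorname{tr}_{\omega}\ddbar u)\,\omega^{n}$, so $\mu = 0$ forces $\operatorname{tr}_{\omega}\ddbar u = 0$ in the sense of distributions; since $u \mapsto \operatorname{tr}_{\omega}\ddbar u$ is elliptic, $u$ is smooth by elliptic regularity. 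But then at every point $\ddbar u$ is a positive semidefinite Hermitian form whose trace with respect to the positive definite $\omega$ is identically $0$, which forces $\ddbar u \equiv 0$. Thus $T$ vanishes on every such ball, hence $T \equiv 0$ on $X$, and therefore the real de Rham class $c_{1}(L) = \{T\}$ is zero, contradicting the hypothesis $c_{1}(L) \neq 0$. (When $n = 1$ this is immediate, since then $T \wedge \omega^{n-1} = T$.)

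The only genuinely nontrivial point is the implication ``$T \wedge \omega^{n-1} = 0 \Rightarrow T = 0$'', and this is precisely where the positivity of \emph{both} factors is needed: it fails for a merely $d$-closed, non-positive $\omega$, and the pointwise step rests on the elementary fact that $\operatorname{tr}(AB) = 0$ for a positive semidefinite matrix $A$ and a positive definite matrix $B$ forces $A = 0$. The remaining ingredients---that a pseudo-effective class contains a closed positive current, and that the de Rham class of $T$ equals $c_{1}(L)$---are standard.
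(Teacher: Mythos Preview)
Your argument is correct, but the paper reaches the key implication ``$T\wedge\omega^{n-1}=0 \Rightarrow T=0$'' by a shorter and more elementary route that avoids elliptic regularity entirely. Namely, the paper observes that because $\omega^{n-1}$ is a \emph{strictly positive} $(n-1,n-1)$-form, every smooth $(n-1,n-1)$-form $\gamma$ satisfies $-C\omega^{n-1}\le\gamma\le C\omega^{n-1}$ for some $C>0$; pairing with the positive current $T$ then gives $|\,T(\gamma)\,|\le C\int_X T\wedge\omega^{n-1}=C\,(L\cdot B^{n-1})$. So if the intersection number vanishes, $T(\gamma)=0$ for \emph{every} test form $\gamma$, which is precisely the statement $T=0$ as a current. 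This is pure linear algebra on forms together with the definition of a current, with no PDE input. Your approach, by contrast, localizes, recognizes $T\wedge\omega^{n-1}$ as the $\omega$-Laplacian of the local potential, invokes elliptic regularity to upgrade the potential to a smooth function, and then uses the pointwise trace argument. Both are valid; the paper's sandwich argument is lighter and works verbatim for any strictly positive smooth $(n-1,n-1)$-form in place of $\omega^{n-1}$, while yours makes explicit the pluriharmonic nature of the potential when equality holds.
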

\begin{proof}
Let $\theta $ be a smooth $d$-closed $(1,1)$-form 
which represents the first Chern class $c_{1}(L)$. 
We take an arbitrary smooth $(n-1,n-1)$-form $\gamma $ on $X$. 
Further we take a K\"ahler form $\omega $ which represents 
the first Chern class $c_{1}(B)$.
Since the $(n-1,n-1)$-form $\omega ^{n-1}$ is 
strictly positive, 
there exists a large constant $C>0$ such that 
$$-C  \omega ^{n-1} \leq \gamma \leq C  \omega ^{n-1}.$$
Here we implicitly used the compactness of $X$.
Since $L$ is pseudo-effective, 
we can take a function $\varphi $ in $\PshX$.
The Levi form $\theta + \ddbar{\varphi }$ is 
a positive $(1,1)$-current. 
It gives the following inequality:
\begin{align*}
-C \int_{X}(\theta + \ddbar{\varphi }) \wedge \omega ^{n-1}
\leq \int_{X} (\theta + \ddbar{\varphi }) \wedge \gamma 
\leq C \int_{X} (\theta + \ddbar{\varphi })  \wedge \omega ^{n-1}.
\end{align*} 
The $(1,1)$-form $\theta $ (reps. $\omega$) represents 
the first Chern class of $L$ (resp. $B$). 
Thus the integral of the right and left hands 
\begin{equation*}
\int_{X} (\theta + \ddbar{\varphi }) \wedge \omega ^{n-1}
\end{equation*}
agrees with the intersection number $( L \cdot B^{n-1} )$. 
If the intersection number is zero, 
\begin{equation*}
\int_{X} (\theta + \ddbar{\varphi } ) \wedge \gamma = 0
\end{equation*} 
for any smooth $(n-1,n-1)$-form $\gamma $ from the above inequality.
It means that the $(1,1)$-current 
$ \theta + \ddbar{\varphi } $ is a zero current.
This is a contradiction to the assumption that 
$c_{1}(L)$ is not zero.
Hence the intersection number $( L \cdot B^{n-1} )$ is 
positive for any ample line bundle $B$.
\end{proof}

\begin{theo}[{\cite[Theorem 1.3]{Zh09}}]
\label{curve}
Let $X$ be a projective manifold
of dimension at least two and $p$, $q$ be points on $X$.
Fix an ample line bundle $B$ on $X$.
Then there exists a smooth curve $C$
with the following properties{\rm:}\\
\ \ $(1)$ $C$ is a complete intersection of 
the complete linear system $|mB|$ for some $m>0$. \\
\ \ $(2)$ $C$ contains points $p$ and $q$.
\end{theo}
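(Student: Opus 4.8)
The plan is to produce $C$ as the isomorphic image of a smooth complete-intersection curve on a blow-up of $X$, where smoothness at the two marked points is forced by an intersection computation rather than by Bertini. Assume first $p \neq q$ and let $\mu : \tilde{X} \to X$ be the blow-up of $X$ at $p$ and $q$, with exceptional divisors $E_{p}$ and $E_{q}$; put $\tilde{L} := \mu^{*}(mB) - E_{p} - E_{q}$. The first step is to fix $m \gg 0$ so that $\tilde{L}$ is very ample: since $-E_{p}-E_{q}$ is $\mu$-ample and $B$ is ample, the class $\mu^{*}(mB) - E_{p} - E_{q}$ becomes ample, hence very ample, for $m$ large. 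Using $\mu_{*}\mathcal{O}_{\tilde{X}} = \mathcal{O}_{X}$ we identify
\[
H^{0}(\tilde{X}, \tilde{L}) = \{\, s \in H^{0}(X, mB) \ : \ s(p) = s(q) = 0 \,\},
\]
so every member of $|\tilde{L}|$ is the strict transform of a divisor $D \in |mB|$ through $p$ and $q$.

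Next I would apply Bertini to $|\tilde{L}|$: as $\tilde{L}$ is very ample and $\dim \tilde{X} = n \geq 2$, a general complete intersection $\tilde{C} = \tilde{D}_{1} \cap \dots \cap \tilde{D}_{n-1}$ of $n-1$ members is smooth, and the iterated connectedness of general very ample sections (which uses $n \geq 2$) makes $\tilde{C}$ connected, hence a smooth irreducible curve. The decisive point is the intersection number with the exceptional loci: restricting to $E_{p} \cong \mathbf{P}^{n-1}$, where $\mu^{*}(mB)|_{E_{p}} = \mathcal{O}$, $E_{q}|_{E_{p}} = 0$, and $E_{p}|_{E_{p}} = \mathcal{O}(-1)$, gives
\[
\tilde{C} \cdot E_{p} = \big( \tilde{L}|_{E_{p}} \big)^{n-1} = \big( \mathcal{O}_{\mathbf{P}^{n-1}}(1) \big)^{n-1} = 1,
\]
and symmetrically $\tilde{C} \cdot E_{q} = 1$. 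Thus $\tilde{C}$ meets each exceptional divisor transversally in one reduced point, so $\mu|_{\tilde{C}}$ is an injective immersion of a complete curve, i.e.\ a closed embedding; its image $C = \mu(\tilde{C})$ is therefore a smooth curve, and it contains $p$ and $q$ by construction.

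It remains to check that $C$ is genuinely the complete intersection $D_{1} \cap \dots \cap D_{n-1}$ with $D_{i} = \mu_{*}\tilde{D}_{i} \in |mB|$, and not merely a component of it. Over $X \setminus \{p,q\}$ the map $\mu$ is an isomorphism, so there $C = \bigcap_{i} D_{i}$; the issue is entirely local at $p$ and $q$. There I would use that, for general members, the hyperplanes $\tilde{D}_{i} \cap E_{p}$ are $n-1$ hyperplanes of $E_{p} \cong \mathbf{P}^{n-1}$ in general position, meeting in the single reduced point $\tilde{C} \cap E_{p}$; this says the $D_{i}$ are smooth at $p$ with transverse tangent hyperplanes, so $\bigcap_{i} D_{i}$ is reduced and smooth at $p$ and coincides with $C$ near $p$ (and likewise at $q$).

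The step I expect to be the main obstacle is precisely this control at the marked points: because $p$ and $q$ lie in the base locus of the image system of $|\tilde{L}|$ on $X$, ordinary Bertini gives no information on the singularities of $\bigcap_{i} D_{i}$ there, and the needed transversality must be read off from $\tilde{X}$ through the intersection computation above. The remaining cases are routine: when $p = q$ one blows up the single point and replaces $\tilde{L}$ by $\mu^{*}(mB) - E_{p}$, and the identical argument (with $\tilde{C}\cdot E_{p} = 1$) applies.
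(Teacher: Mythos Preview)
Your argument is correct and is a genuinely different route from the paper's. The paper does not blow up $X$ at all: it quotes Zhang's Bertini-type theorem \cite[Theorem~1.3]{Zh09} to assert directly that, since any two points of $X\subset\mathbb{P}^N$ are in ``general position'' in Zhang's sense, a general member of the sublinear system $|mB|_{p,q}\subset|mB|$ of divisors through $p$ and $q$ is already smooth and irreducible on $X$, and then intersects $n-1$ such general members. Your approach replaces this black-box Bertini-with-base-points by the standard trick of passing to the blow-up, where the base locus disappears and the classical Bertini theorem for the very ample class $\tilde{L}=\mu^{*}(mB)-E_{p}-E_{q}$ applies; the smoothness of $C=\mu(\tilde{C})$ at the marked points is then forced by the computation $\tilde{C}\cdot E_{p}=(\tilde{L}|_{E_{p}})^{\,n-1}=1$ together with the general-position of the hyperplanes $\tilde{D}_{i}\cap E_{p}$ in $E_{p}\cong\mathbb{P}^{n-1}$. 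The payoff of your method is that it is self-contained (only the usual Bertini and elementary blow-up facts are used) and it makes transparent exactly why the intersection $\bigcap_i D_i$ is transverse at $p$ and $q$, a point the paper's sketch leaves implicit in the citation. The paper's approach, on the other hand, is shorter once one is willing to invoke \cite{Zh09}, and it yields the stronger intermediate statement that each individual hypersurface $D_i$ through $p,q$ can be taken smooth and irreducible on $X$.
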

\begin{proof}
Take an embedding of $X$ into the projective space 
$\mathbb{P}^{N}$.
Then two points $p$, $q$ are always in general 
position in $\mathbb{P}^{N}$ (see [Zh09, Definition 1.1] for the definition).
$\rm{[Zh09, Theorem 1.3]}$ asserts that 
a general member of $|mB|_{p,q}$ is irreducible and smooth, where 
$|mB|_{p,q}$ is a linear system
in $|mB|$ passing through $p$ and $q$.
Then by taking a complete intersection of 
general members of $|mB|_{p,q}$, 
we can construct a curve with the above properties.
\end{proof}

\begin{lemm}\label{potential}
Let $C$ be an irreducible curve on $X$ and $p$ be 
a non-singular point on $C$.
Assume that the intersection number $(L\cdot C)$ is positive 
$($that is, the restriction $L|_{C}$ is ample$)$.
Then there exists a function $\varphi$ on $C$ with 
the following properties{\rm:}\\
\ \ $(1)$ $\varphi \in \Psh{C}$\\
\ \ $(2)$ The function $\varphi$ has pole at 
$p$ $($that is, $\varphi(p)= -\infty$$)$
and is smooth 
except $p$ .
\end{lemm}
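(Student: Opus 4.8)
The plan is to obtain $\varphi$ by prescribing its curvature current and then inverting $\ddbar$, rather than by writing $\varphi$ as the logarithm of the norm of a holomorphic section of a power $L^{\otimes m}|_{C}$. The latter cannot work: such a section has divisor of degree $m(L\cdot C)$, and there is no reason for that divisor to be supported at the single point $p$ (that would force $L^{\otimes m}|_{C}$ to be isomorphic to $\mathcal{O}_{C}(m(L\cdot C)\cdot p)$, which fails in general), so the naive construction always produces extra poles. To set things up, I would pass to the normalization $\nu\colon\widetilde C\to C$: then $\widetilde C$ is a compact Riemann surface, $\widetilde\theta:=\nu^{*}(\theta|_{C})$ is a smooth real $(1,1)$-form, and since $p$ is a smooth point of $C$ the map $\nu$ is biholomorphic near $p$; put $\widetilde p:=\nu^{-1}(p)$. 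By the definition of $\theta|_{C}$-psh functions via the normalization it suffices to produce $\widetilde\varphi$ on $\widetilde C$ that is $\widetilde\theta$-psh, smooth on $\widetilde C\setminus\{\widetilde p\}$, and with $\widetilde\varphi(\widetilde p)=-\infty$, and then to push it down to $C$.

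Next, set $d:=(L\cdot C)$, positive by hypothesis; since $\theta$ represents $c_{1}(L)$ we have $\int_{\widetilde C}\widetilde\theta=d$. Choose a constant $c$ with $0<c\le d$ and a smooth semipositive $(1,1)$-form $\beta$ on $\widetilde C$ with $\int_{\widetilde C}\beta=d-c$ (possible since $d-c\ge0$), and set $T:=c\,[\widetilde p]+\beta$, namely the current of integration over $\widetilde p$ scaled by $c$, plus $\beta$. Then $T$ is a positive closed $(1,1)$-current on $\widetilde C$ with $\int_{\widetilde C}T=c+(d-c)=d=\int_{\widetilde C}\widetilde\theta$. As $\widetilde C$ is a Riemann surface (so Kähler, with $b_{2}=1$), the currents $T$ and $\widetilde\theta$ are cohomologous, hence $\ddbar$-cohomologous by the $\ddbar$-lemma, so there is $\widetilde\varphi\in L^{1}(\widetilde C)$ with $\widetilde\theta+\ddbar\widetilde\varphi=T$. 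In particular $\widetilde\theta+\ddbar\widetilde\varphi\ge0$, so, after correcting on a null set, $\widetilde\varphi$ is $\widetilde\theta$-psh. This is the heart of the argument, and it is exactly here that the positivity of $L|_{C}$ is used: it is what leaves room to put an atom of positive mass $c$ at $\widetilde p$ while keeping the total mass of $T$ equal to $\int_{\widetilde C}\widetilde\theta$, which is precisely the vanishing obstruction needed to solve the $\ddbar$-equation.

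Finally, I would check the regularity and the pole. On $\widetilde C\setminus\{\widetilde p\}$ one has $\ddbar\widetilde\varphi=\beta-\widetilde\theta$, a smooth form, so $\widetilde\varphi\in C^{\infty}(\widetilde C\setminus\{\widetilde p\})$ by elliptic regularity. In a holomorphic coordinate $z$ centered at $\widetilde p$, $\ddbar\widetilde\varphi$ equals $c$ times the Dirac current at $0$ plus a smooth form; subtracting the corresponding multiple of $\log|z|$ leaves a function whose $\ddbar$ is smooth, hence smooth, so $\widetilde\varphi=(\text{positive constant})\log|z|+C^{\infty}$ near $\widetilde p$ and $\widetilde\varphi(\widetilde p)=-\infty$. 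To descend, put $\varphi:=\widetilde\varphi\circ\nu^{-1}$ on $C_{\mathrm{reg}}$; since $C_{\mathrm{sing}}$ is finite, does not contain $p$, and lies in the locus where $\widetilde\varphi$ is smooth and bounded, $\varphi$ is locally bounded above there and extends to a function in $\Psh{C}$; near $p$ it inherits the logarithmic pole of $\widetilde\varphi$, so $\varphi(p)=-\infty$ and $\varphi$ is smooth away from $p$. The only genuine difficulty is the existence of $\widetilde\varphi$ in the middle step; the normalization, the regularity theory, the $\ddbar$-lemma on a Riemann surface, and the descent are all standard.
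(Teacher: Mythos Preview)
Your argument is correct and takes a genuinely different route from the paper. The paper's proof is more elementary: it first uses ampleness of $L|_{C}$ to embed $C$ into some $\mathbb{P}^{N}$ and pulls back a Fubini--Study potential to obtain a smooth \emph{strictly} $\theta|_{C}$-psh function $\varphi_{1}$; then it sets $\varphi_{2}=\rho\log|z|^{2}$ for a cutoff $\rho$ supported near $p$, and takes $\varphi=(1-\varepsilon)\varphi_{1}+\varepsilon\varphi_{2}$ for $\varepsilon>0$ small, so that the strict positivity of the Levi form of $\varphi_{1}$ absorbs the negativity coming from $\ddbar\varphi_{2}$. Your approach instead prescribes the curvature current $T=c[\widetilde p]+\beta$ on the normalization and solves $\ddbar\widetilde\varphi=T-\widetilde\theta$ via the $\ddbar$-lemma, then invokes elliptic regularity. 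Your method makes it transparent that the only obstruction is cohomological---positivity of $(L\cdot C)$ is precisely what leaves room for an atom of positive mass at $\widetilde p$---while the paper's construction avoids both the $\ddbar$-lemma for currents and elliptic regularity, and automatically produces a function that is the restriction of an ambient smooth function away from $p$.

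One small imprecision in your descent step: at a singular point $q\in C$ the fibre $\nu^{-1}(q)$ may consist of several points carrying distinct values of $\widetilde\varphi$, so $\widetilde\varphi\circ\nu^{-1}$ is not literally defined at $q$, and the upper-semicontinuous extension you take need not be ``smooth'' at $q$ in the sense of being the restriction of an ambient smooth function. This is harmless for every application in the paper (only smoothness at chosen regular points of $C$ is ever used), but it is worth noting that the paper's convex-combination construction does yield ambient smoothness on all of $C\setminus\{p\}$, singular points included.
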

\begin{proof}
By the assumption, $L|_{C}$ is ample on $C$.
Therefore we can obtain a smooth 
strictly $\theta |_{C}$-psh function $\varphi_{1}$ on $C$. 
Even if $C$ has singularities, we can obtain such function.
In fact, there exists an integer $m_{0} > 0$ such that 
the complete linear system of $m_{0}L|_{C}$
gives an embedding of $C$ to the projective space 
$\mathbb{P}^{N}$, since $L|_{C}$ is ample. 
Now there exists a smooth strictly $\theta_{0}$-psh function 
$\psi$ on $\mathbb{P}^{N}$. 
Here $\theta_{0}$ is a $(1,1)$-form representing the 
first Chern class of the hyperplane bundle 
$\mathcal{O}_{\mathbb{P}^{N}}(1)$ on $\mathbb{P}^{N}$. 
Since the restriction to $C$ 
of $\mathcal{O}_{\mathbb{P}^{N}}(1)$ is equal to 
$m_{0}L|_{C}$, the function $ (1/m_{0}) \psi |_{C}$
gives a smooth strictly $\theta |_{C}$-psh function on $C$.

Let $z$ be a local coordinate on $C$ centered at $p$.
We define a function $\varphi_{2}$ on $C$ to be 
$ \varphi_{2}: = \rho \log |z|^{2} $, where 
$\rho$ is a smooth function on $C$ 
whose support is contained in some neighborhood of $p$.
Then $ \varphi_{2}$ has a pole only at $p$. 
Further $ \varphi_{2}$ is an almost psh function 
(that is, there exists a smooth $(1,1)$-form $\gamma$ such that 
$\ddbar  \varphi_{2} \geq \gamma$).
Then a function $\varphi$ which is defined to be 
$\varphi = (1-\varepsilon)\varphi_{1} + \varepsilon \varphi_{2}$ 
satisfies the above condition 
for a sufficiently small $\varepsilon>0$. 
In fact, the property (1) follows from the strictly positivity of 
the Levi-form of $\varphi_{1}$.
The function $\varphi$ has a pole only at $p$ thanks to $\varphi_{2}$. 
\end{proof}

Lemma \ref{inter}, \ref{potential} say that 
there exists many $\theta |_{C}$-psh functions 
on a complete intersection of very ample divisors.

\begin{lemm}\label{max}
Let $\gamma $ be a smooth $d$-closed $(1,1)$-form on $X$ 
and a function $\varphi_{i}$ $($for $i=1,2)$ 
be a $\gamma$-psh function on $X$.  
Then the function $\max(\varphi_{1}, \varphi_{2})$ 
is also a $\gamma$-psh function on $X$.
\end{lemm}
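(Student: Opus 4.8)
The plan is to verify the two defining properties of a $\gamma$-psh function for $u := \max(\varphi_1, \varphi_2)$: upper semi-continuity, and the positivity of the current $\gamma + \ddbar u$. The first is immediate, since the maximum of finitely many upper semi-continuous functions is upper semi-continuous, and $u$ takes values in $[-\infty, \infty)$ as both $\varphi_i$ do. So the content of the lemma is the curvature inequality, and this is a local statement that I would reduce to the classical fact that the maximum of two plurisubharmonic functions is plurisubharmonic.

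Concretely, I would cover $X$ by coordinate charts $U_\alpha$ on which $\gamma = \ddbar h_\alpha$ for some smooth function $h_\alpha$ (this local potential exists because $\gamma$ is a smooth $d$-closed real $(1,1)$-form, by the local $\ddbar$-lemma / local $i\partial\bar\partial$-Poincar\'e lemma). On $U_\alpha$, the hypothesis that $\varphi_i$ is $\gamma$-psh says precisely that $h_\alpha + \varphi_i$ is plurisubharmonic on $U_\alpha$ (it is upper semi-continuous since $h_\alpha$ is smooth, and $\ddbar(h_\alpha + \varphi_i) = \gamma + \ddbar\varphi_i \geq 0$). Then
\begin{equation*}
h_\alpha + u = \max\bigl( h_\alpha + \varphi_1,\ h_\alpha + \varphi_2 \bigr)
\end{equation*}
is a maximum of two plurisubharmonic functions, hence plurisubharmonic on $U_\alpha$. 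Therefore $\gamma + \ddbar u = \ddbar(h_\alpha + u) \geq 0$ on $U_\alpha$, and since the charts cover $X$ this gives $\gamma + \ddbar u \geq 0$ globally. Combined with the upper semi-continuity noted above, this shows $u \in \mathrm{Psh}(X, \gamma)$.

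The only point requiring any care is the passage from "$h_\alpha + u$ is psh" on each chart to a well-defined global positive current $\gamma + \ddbar u$; but the local potentials differ by pluriharmonic functions on overlaps, so the currents $\ddbar(h_\alpha + u)$ patch to the single current $\gamma + \ddbar u$, and positivity is a local property. I do not anticipate a genuine obstacle here: the lemma is essentially the statement that the classical "max of psh is psh" is compatible with twisting by a fixed smooth closed form, and the proof is the standard localization argument.
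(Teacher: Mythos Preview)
Your proof is correct and follows essentially the same approach as the paper: localize using a smooth potential $\psi$ (your $h_\alpha$) for the closed $(1,1)$-form $\gamma$, reduce to the classical fact that the maximum of two plurisubharmonic functions is plurisubharmonic via the identity $\psi + \max(\varphi_1,\varphi_2) = \max(\psi+\varphi_1,\psi+\varphi_2)$, and note that upper semi-continuity is preserved. Your write-up is somewhat more explicit about the patching on overlaps, but the argument is the same.
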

\begin{proof}
First we remark $\gamma$-plurisubharmonicity is a local property.
We can locally take a smooth potential function 
of $\gamma $ since $\gamma $ is a $d$-closed $(1,1)$-form.
Thus we can locally write $\gamma = \ddbar \psi$ 
for some function $\psi $.
By the assumption, $\ddbar( \psi + \varphi_{i} )$ is a positive 
current.
Therefore the Levi form of  
\begin{equation*}
\max(\psi + \varphi_{1}, \psi + \varphi_{2})= 
\psi + \max(\varphi_{1}, \varphi_{2})
\end{equation*}
is also a positive current.
It means that $\gamma + \ddbar 
\max(\varphi_{1}, \varphi_{2} )\geq 0$.
Upper semi-continuity of functions is preserved. 
Hence the function $\max(\varphi_{1}, \varphi_{2} )$ is 
a $\gamma$-psh function.
\end{proof}

For the proof of Theorem \ref{Main}, 
it is important to obtain strict positivity from
extended $\theta$-psh functions.
The main idea for the purpose is to 
use the volume of a line bundle and 
its expression formula in terms of current integration, 
which is proved in \cite{Bou02}.
\begin{defi}
Let $M$ be a line bundle on 
a projective variety $Y$ of dimension $d$.
Then the volume of $M$ on $Y$ is defined to be 
\begin{equation*}
\V{Y}(M) = \limsup_{k \to \infty} \dfrac
{ \dim H^{0}\big( Y,\mathcal{O}_{Y}(kM) \big) }{k^d/d!}.
\end{equation*}
\end{defi}
The volume asymptotically measures  
the number of global holomorphic sections. 
The volume of a line bundle can be defined for 
a $\mathbb{Q}$-line bundle and, 
depends only on 
the numerical class (the first Chern class) of the line bundle. 
Moreover the volume is a continuous function on 
the $\mathbb{Q}$-vector space $N^{1}(Y)_{\mathbb{Q}}$ of 
the numerical equivalent classes of $\mathbb{Q}$-line bundles. 
(See \cite[Proposition 2.2.35, 2.2.41]{La04} for the precise statement.) 
The above properties are used in the proof of Theorem \ref{Main}.

The following proposition gives a relation 
between the volume and curvature currents of 
a line bundle.
It is proved by using singular Morse inequalities (which is proved 
in \cite{Bon98}) and approximations of $\theta$-psh functions 
(see \cite{Bou02} for details).

\begin{prop}[{\cite[Proposition 3.1]{Bou02}}]
\label{Bou}
Let $M$ be a pseudo-effective line bundle on 
a projective manifold $Y$ of dimension $d$ and 
$\eta $ a smooth $(1,1)$-form which represents the 
first Chern class $c_{1}(M)$ of $M$.
Then for any $\eta $-psh function $\varphi $ on $Y$, 
we have
$$ \V{Y}(M) \geq \int_{Y} 
\big(\eta + \ddbar \varphi \big)_{\ac}^{d}.
$$
\end{prop}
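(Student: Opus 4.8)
The plan is to estimate $\dim H^{0}(Y,\mathcal{O}_{Y}(kM))$ from below by the absolutely continuous Monge--Amp\`ere mass of $\eta+\ddbar\varphi$, for which the natural tool is the singular holomorphic Morse inequalities of \cite{Bon98}. Those inequalities apply only to quasi-psh weights with \emph{analytic singularities}, whereas the given $\varphi$ may be arbitrarily singular; so the first task is to replace $\varphi$ by weights of this special type while controlling the absolutely continuous mass in the limit.

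First I would invoke Demailly's regularization theorem for $\varphi$. Fixing a K\"ahler form $\omega$ on $Y$, it produces a sequence $\varphi_{j}$ of functions with analytic singularities, each $(\eta+\varepsilon_{j}\omega)$-psh for some $\varepsilon_{j}\downarrow 0$, approximating $\varphi$ and — this is the essential point — satisfying
\[
\int_{Y}\big(\eta+\varepsilon_{j}\omega+\ddbar\varphi_{j}\big)_{\ac}^{d}\longrightarrow \int_{Y}\big(\eta+\ddbar\varphi\big)_{\ac}^{d}.
\]
Fixing an ample line bundle $A$ with $c_{1}(A)=[\omega]$ and taking $\varepsilon_{j}$ rational, I read $\varphi_{j}$ as a singular weight on the $\mathbb{Q}$-line bundle $M_{j}:=M+\varepsilon_{j}A$, whose curvature current $\Theta_{j}:=\eta+\varepsilon_{j}\omega+\ddbar\varphi_{j}$ is positive.

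Next I would apply the weak singular Morse inequality to $\Theta_{j}$. Since $\Theta_{j}\geq 0$, its absolutely continuous part $(\Theta_{j})_{\ac}$ is positive semidefinite almost everywhere, hence has no negative eigenvalues; so the set of points with at most one negative eigenvalue is all of $Y$, and the inequality reads
\[
\dim H^{0}\big(Y,\mathcal{O}_{Y}(kM_{j})\otimes\mathcal{I}(k\varphi_{j})\big)\geq \frac{k^{d}}{d!}\int_{Y}(\Theta_{j})_{\ac}^{d}-o(k^{d}).
\]
Because the multiplier ideal $\mathcal{I}(k\varphi_{j})$ is a subsheaf of $\mathcal{O}_{Y}$, the left-hand side is bounded by $\dim H^{0}(Y,\mathcal{O}_{Y}(kM_{j}))$; dividing by $k^{d}/d!$ and taking $\limsup$ in $k$ yields $\V{Y}(M_{j})\geq \int_{Y}(\Theta_{j})_{\ac}^{d}$.

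Finally I would let $j\to\infty$. By continuity of the volume on $N^{1}(Y)_{\mathbb{Q}}$ and $M_{j}\to M$, the left-hand side tends to $\V{Y}(M)$, while by the mass convergence arranged in the regularization step the right-hand side tends to $\int_{Y}(\eta+\ddbar\varphi)_{\ac}^{d}$, which gives the claim. The main obstacle is exactly this last convergence: the absolutely continuous part of a positive current is in general only semicontinuous under weak limits and can shed mass onto the polar set, so a generic approximation would not suffice. What rescues the argument is that Demailly's regularization, built from Ohsawa--Takegoshi extension and Bergman-kernel estimates, simultaneously controls the Lelong numbers and the absolutely continuous parts; this is precisely the analytic input one must draw from \cite{Bou02}.
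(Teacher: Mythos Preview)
Your outline is correct and coincides with the approach the paper itself attributes to \cite{Bou02}: the paper does not reprove the proposition but states that it follows from Bonavero's singular Morse inequalities combined with Demailly-type approximation of $\theta$-psh functions by ones with analytic singularities, which is exactly the two-step structure you give. You also correctly isolate the genuine analytic difficulty, namely the convergence of the absolutely continuous Monge--Amp\`ere masses under regularization, and correctly defer it to the input from \cite{Bou02}.
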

Here $\big(\eta + \ddbar \varphi \big)_{\ac}$ means the absolutely continuous part
of a positive current $\big(\eta + \ddbar \varphi \big)$ by 
the Lebesgue decomposition.  
(See \cite[Section2]{Bou02} for the precise definition.)
We use only property that 
when $\varphi$ is smooth on an open set, the equality 
\begin{equation*}
\big(\eta + \ddbar \varphi \big)_{\ac} = \big(\eta + \ddbar \varphi \big)
\end{equation*}
holds on the open set. 

Actually, the above inequality would be an equality 
by taking supremum of the right-hand side over 
all $\eta $-psh functions (see \cite[Theorem 1.2]{Bou02}).
It is generalized to the restricted volume along a subvariety
(cf. \cite[Theorem 1.2]{Mat10}).
These expressions of the volume and restricted volume 
with current integrations give an example, which show 
us that there exists a $\theta $-psh function on some subvariety
which can not be extended to a global $\theta $-psh function 
even if $L$ is a big line bundle 
(see Example \ref{Ex2}).

In section 3, we need to approximate
a given $\theta $-psh function
by almost psh functions with mild singularities.
For the purpose, we use Theorem \ref{appro}. 
Theorem \ref{appro} says that it is possible to approximate a given almost psh function
with the same singularities as a logarithm of a sum of squares of holomorphic functions 
without a large loss of positivity of the Levi form.

\begin{theo}[{\cite[Theorem 13.12]{Dem}}]
\label{appro}
Let $\varphi $ be an almost psh function on a compact complex manifold $X$ such that $ \ddbar \varphi >\gamma $ for some continuous $(1,1)$-form $\gamma $. 
Fix a hermitian form $\omega$ on $X$.
Then there exists a sequence of
almost psh functions $\varphi _{k}$ and a decreasing sequence 
$\delta_{k} > 0$ converging to $0$
with the following properties$:$\\
{\rm (A)}\ \ $\varphi (x) < \varphi _{k}(x) \leq 
\sup_{|\zeta -x|<r}\varphi (\zeta ) + 
C \Big( \dfrac{| \log r|}{k}+ r + \delta_{k} \Big)$\\
\ \ \ \ \ \ \ with respect to coordinate open 
sets covering $X$.\\
{\rm (B)}\ \ $\varphi _{k}$ has the same singularities 
as a logarithm of a sum of squares of 
holomorphic functions.
In particular, $\varphi _{k}$ is 
smooth except the polar set of $\varphi$.\\
{\rm (C)}\ \ 
$\ddbar \varphi _{k}\geq  \gamma -\delta_{k} \omega.$

\end{theo}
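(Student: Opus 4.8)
The plan is to argue by contraposition: assuming that $L$ is pseudo-effective with $c_{1}(L)\neq 0$ but \emph{not} ample, I will exhibit a strongly movable curve $C$ together with a function $\varphi\in\Psh{C}$ that admits \emph{no} extension to $\PshX$, contradicting the hypothesis. In this way the whole problem is concentrated in a single task: converting the failure of ampleness into a quantitative positivity deficit along one movable curve, and then reading that deficit off the volume/current integral furnished by Proposition \ref{Bou}. This also matches the remark in the introduction that it suffices to test extendability on strongly movable curves.

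First I would fix an ample line bundle $B$ with a K\"ahler form $\omega\in c_{1}(B)$ and, using Theorem \ref{curve}, take $C$ to be a smooth complete intersection of general members of $|mB|$ passing through a prescribed point $p$ at which the positivity of $L$ degenerates. Lemma \ref{inter} guarantees $(L\cdot C)=m^{\,n-1}(L\cdot B^{n-1})>0$, so $L|_{C}$ is ample, and Lemma \ref{potential} then supplies a $\varphi\in\Psh{C}$ that is smooth away from $p$ and has a small logarithmic pole at $p$. Its total curvature mass on $C$ is the intrinsic invariant $\int_{C}(\theta|_{C}+\ddbar\varphi)=(L\cdot C)$, and by shrinking the pole its absolutely continuous part can be made as close to $(L\cdot C)$ as we wish. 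Lemma \ref{max} is available to patch several such model potentials, coming from curves through different points, into a single test function should that be needed.

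Next, suppose for contradiction that $\varphi$ extends to some $\widetilde{\varphi}\in\PshX$ with $\widetilde{\varphi}|_{C}=\varphi$. I would regularize $\widetilde{\varphi}$ by Theorem \ref{appro}, producing $\widetilde{\varphi}_{k}$ with analytic singularities and $\theta+\ddbar\widetilde{\varphi}_{k}\geq-\delta_{k}\omega$, so that on the locus where $\varphi$ is smooth the absolutely continuous part $(\theta+\ddbar\widetilde{\varphi}_{k})_{\ac}$ genuinely computes the curvature. Applying Proposition \ref{Bou} together with its restricted-volume refinement along $C$ (cf. \cite{Mat10}) bounds the attainable absolutely continuous mass of the restriction of $\theta+\ddbar\widetilde{\varphi}_{k}$ to $C$ by the restricted positivity of $L$ along $C$. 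The decisive point is that the failure of ampleness forces this restricted positivity to be strictly smaller than $(L\cdot C)$, while the restriction of the extension reproduces $\varphi$, whose absolutely continuous mass was arranged to exceed that bound; letting $k\to\infty$ then yields the contradiction.

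The main obstacle, and the step I expect to be most delicate, is precisely the quantitative comparison in the previous paragraph: I must show that non-ampleness of $L$ produces a genuine, uniform gap between the intersection number $(L\cdot C)$ and the supremum of absolutely continuous masses of restrictions to $C$ of global $\theta$-psh functions, and that this gap survives both the passage to the absolutely continuous part and the Demailly regularization. Controlling the singular part of $\theta+\ddbar\widetilde{\varphi}$ along $C$ — exactly the phenomenon flagged in the introduction, namely that $\widetilde{\varphi}$ may be singular where $\varphi$ is smooth — is what makes this comparison subtle, and it is here that the hypothesis $c_{1}(L)\neq 0$, through Lemma \ref{inter}, must be invoked to ensure that the gap is strictly positive.
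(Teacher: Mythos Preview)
Your proposal does not address the stated theorem at all. The statement you were asked to prove is Theorem~\ref{appro}, the Demailly approximation theorem for almost psh functions on a compact complex manifold: given $\varphi$ with $\ddbar\varphi>\gamma$, produce a sequence $\varphi_{k}$ with analytic singularities satisfying the two-sided estimate (A), the regularity property (B), and the curvature lower bound (C). The paper does not supply a proof of this result; it is quoted verbatim from \cite[Theorem~13.12]{Dem} and used as a black box in Section~3. A proof would proceed via local Bergman-kernel (Ohsawa--Takegoshi) approximations glued together, which is the content of Demailly's argument; none of that appears in your write-up.

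What you have written is instead a sketch of an approach to the \emph{main} Theorem~\ref{Main}. Even judged on those terms, your strategy diverges substantially from the paper's and contains a genuine gap. The paper argues directly, not by contraposition: it first proves $L$ is nef (Proposition~\ref{Nef} and its corollary), and then for each subvariety $V$ of dimension $d\geq 2$ it \emph{constructs} an explicit $(\theta|_{V}+\varepsilon\omega)$-psh function $\Psi_{\varepsilon}$ that equals $A|z|^{2}-A$ on a fixed neighborhood $U$ of a chosen point, so that Proposition~\ref{Bou} and Fatou's lemma yield $\V{V}(L)\geq\int_{U}(\theta|_{V}+A\ddbar|z|^{2})^{d}>0$. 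Your contrapositive route hinges on the assertion that non-ampleness forces a \emph{strict, uniform} gap between $(L\cdot C)$ and the supremum of absolutely continuous masses on $C$ of restrictions of global $\theta$-psh functions; you acknowledge this as ``the main obstacle'' but give no mechanism for producing such a gap. In fact for a nef, non-ample $L$ one can have $\mathrm{vol}_{X|C}(L)=(L\cdot C)$ for a general complete-intersection curve $C$ (the deficiency in the Nakai--Moishezon criterion lives on higher-dimensional subvarieties, not on movable curves), so the proposed numerical comparison need not yield any contradiction.
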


\section{Proof of Theorem \ref{Main}}
In this section, we give the proof of Theorem \ref{Main}.
Let $L$ be a line bundle 
with the assumption of Theorem \ref{Main} and 
$\theta$ be a smooth $d$-closed $(1,1)$-form which 
represents the first Chern class $c_{1}(L)$. 
According to the Nakai-Moishezon-Kleiman criterion 
(cf. \cite[Theorem 1.2.23]{La04}), 
in order to show that $L$ is ample, 
it is enough to see 
the self-intersection number $(L^{d} \cdot V)$ along $V$
is positive for any irreducible subvariety $V$. 

For the purpose, we first show the following proposition 
which implies that 
the self-intersection number along 
an irreducible curve is always positive. 

\begin{prop}\label{Nef}
Let $L$ be a line bundle with 
the assumption in Theorem \ref{Main} and
$V$ be an (irreducible) subvariety on $X$.
Then \\
\ \ $(1)$ The restriction $L|_{V}$ to $V$ is pseudo-effective.\\
\ \ $(2)$ The restriction $L|_{V}$ to $V$ is not numerically trivial.
\end{prop}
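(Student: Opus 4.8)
The plan is to establish both statements simultaneously by exhibiting, for an irreducible $V$ with $\dim V \geq 1$, a single $\widetilde\varphi \in \PshX$ whose restriction to $V$ is a bona fide $\theta|_{V}$-psh function (giving (1)) but has a pole at a prescribed point of $V$ (which, combined with a maximum principle argument, gives (2)). We may assume $\dim V \geq 1$, since for a point (1) is trivial and (2) is vacuous; we may also assume $\dim X \geq 2$, since when $\dim X = 1$ the hypotheses already force $L$ to be ample.

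First I would fix two distinct points $p_{1}, p_{2}$ in the non-singular locus $V_{\reg}$ (which is nonempty and infinite, $V$ being irreducible of positive dimension) and an ample line bundle $B$ on $X$. By Theorem \ref{curve} there is a smooth curve $C \subseteq X$, a complete intersection of $n-1$ general members of $|mB|$ for some $m > 0$, with $p_{1}, p_{2} \in C$. Since the numerical class of $C$ is $(mB)^{n-1}$, Lemma \ref{inter} gives $(L \cdot C) = m^{n-1}(L \cdot B^{n-1}) > 0$, so $L|_{C}$ is ample on $C$. Applying Lemma \ref{potential} at the non-singular point $p_{1} \in C$ yields $\varphi_{C} \in \Psh{C}$ which is smooth on $C \setminus \{p_{1}\}$ and satisfies $\varphi_{C}(p_{1}) = -\infty$; in particular $\varphi_{C}(p_{2})$ is finite.

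Now I would apply the extendability hypothesis to the subvariety $C$ and the function $\varphi_{C}$, obtaining $\widetilde\varphi \in \PshX$ with $\widetilde\varphi|_{C} = \varphi_{C}$. Since $p_{2} \in C \cap V$ and $\widetilde\varphi(p_{2}) = \varphi_{C}(p_{2})$ is finite, the restriction $\widetilde\varphi|_{V}$ is not identically $-\infty$, hence it lies in $\Psh{V}$ --- recall that the restriction of a global $\theta$-psh function to a subvariety, once it is not identically $-\infty$, is $\theta|_{V}$-psh in the sense of \cite{CGZ}. This proves (1). For (2), suppose $L|_{V}$ were numerically trivial. Take a resolution $\nu : \widetilde V \to V$ with $\widetilde V$ smooth projective, hence compact, connected and K\"ahler. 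Then $c_{1}(\nu^{*}L|_{V}) = 0$ in $H^{2}(\widetilde V, \mathbb{R})$, so by the $\ddbar$-lemma we may write $\nu^{*}(\theta|_{V}) = \ddbar \psi$ with $\psi$ smooth on $\widetilde V$. Pulling back, $\nu^{*}(\widetilde\varphi|_{V})$ is $\nu^{*}(\theta|_{V})$-psh, so $\psi + \nu^{*}(\widetilde\varphi|_{V})$ is psh on $\widetilde V$; it is not identically $-\infty$ because it is finite over $p_{2}$. On the compact connected $\widetilde V$ it is therefore constant, forcing $\nu^{*}(\widetilde\varphi|_{V})$, and hence $\widetilde\varphi|_{V}$, to be finite at every point of $V$ --- contradicting $\widetilde\varphi|_{V}(p_{1}) = \varphi_{C}(p_{1}) = -\infty$. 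So $L|_{V}$ is not numerically trivial.

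I expect the main obstacle to be precisely the conflicting demands of (1) and (2): naively restricting an arbitrary $\theta$-psh function from $X$ may produce the identically $-\infty$ ``function'' on $V$, whereas for (2) one wants a restriction with a genuine pole on $V$. Funnelling both requirements through one movable curve $C$ meeting $V$ in two prescribed points --- using $p_{1}$ to manufacture the pole via Lemma \ref{potential} and $p_{2}$ to keep the restriction nondegenerate --- is what makes the argument go through. A secondary point is the passage to the resolution $\widetilde V$ so that the $\ddbar$-lemma and the maximum principle for psh functions apply; here one uses the standard fact that the pullback of a $\theta|_{V}$-psh function along $\nu$ is $\nu^{*}(\theta|_{V})$-psh.
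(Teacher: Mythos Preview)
Your proposal is correct and follows essentially the same route as the paper: pick two regular points of $V$, thread a complete-intersection curve $C$ through them via Theorem~\ref{curve}, use Lemma~\ref{inter} to see $L|_{C}$ is ample, manufacture a $\theta|_{C}$-psh function with a pole at one point and a finite value at the other via Lemma~\ref{potential}, extend to $X$, and restrict to $V$. The only cosmetic differences are that the paper treats the smooth and singular cases of $V$ separately (you go straight to a resolution), and in the contradiction for (2) the paper invokes that $\theta$-pluriharmonic functions are smooth, whereas you stop at the weaker (but sufficient) observation that the constant $\psi+\nu^{*}(\widetilde\varphi|_{V})$ forces $\widetilde\varphi|_{V}$ to be everywhere finite.
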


\begin{rem}
When $V$ is non-singular, the property (2) means 
that the first Chern class $c_{1}(L|_{V})$ is not zero.
\end{rem}
\begin{proof}
First we 
take two different points $p$, $q$ on $V_{\text{reg}}$. 
Here $V_{\text{reg}}$ means the regular locus of $V$.
Then we can take a smooth curve $C$ on $X$ 
such that $C$ contains $p$, $q$
by Theorem \ref{curve}.
By the construction, the curve $C$ is a complete intersection of 
the linear system of some very ample line bundles.
It follows that the intersection number  
$(L \cdot C)$ along $C$ is positive from Lemma \ref{inter}.
Lemma \ref{potential} asserts that 
there exists a function $\varphi \in \Psh{C}$ 
such that $\varphi$ has a pole at $p$ and is smooth at $q$. 
The $\theta |_C{}$-function $\varphi$ on $C$ can be extended 
to a global $\theta$-function 
on $X$ by the assumption of the extendability. 
The extended function to $X$ does not have a pole at $q \in V$.
It means that the restriction to $V$ of the function is 
well-defined 
(that is, the function is not identically $-\infty$ on $V$).
We denote by $\widetilde {\varphi}$ 
the restriction  to $V$ of the function.
The function $\widetilde {\varphi}$ gives an element in $\Psh{V}$.
Hence $L|_{V}$ is pseudo-effective.

From now on, 
we show that $L|_{V}$ is not numerically trivial.
For a contradiction, we assume that $L|_{V}$ is numerically trivial.
First we consider the case when $V$ is non-singular.
Then there exists a function on $V$ such that 
\begin{equation*}
\theta |_{V}+ \ddbar \widetilde {\varphi} = \ddbar \psi. 
\end{equation*}
from the $\partial \bar{\partial}$-Lemma, 
since $L|_{V}$ is numerically trivial 
(that is, first Chern class $c_{1}(L|_{V})$ is zero).
Since the function $\widetilde {\varphi}$ is 
a $\theta |_{V}$-psh, $\psi$ is a psh function on $V$. 
It follows that $\psi$ is actually a constant 
by the maximum principle of psh functions. 
Therefore $\theta |_{V}+ \ddbar \widetilde {\varphi}$ 
is a zero current.
We know that $\theta $-pluriharmonic functions are always smooth.
Hence the function $\widetilde {\varphi}$ is smooth on $V$.
However $\widetilde {\varphi}$ has a pole at $p$
by the construction.
This is a contradiction.

We need to consider the case when $V$ has singularities. 
Then we take an embedded resolution
\begin{equation*}
\mu : \widetilde {V} \subseteq  \widetilde {X}  \longrightarrow V \subseteq X
\end{equation*}
of $V \subseteq X$.
That is, $\mu :\widetilde {X} \longrightarrow X$ is a 
birational morphism and the restriction of $\mu$ 
to $\widetilde {V}$ gives a resolution of 
singularities of $V$. 
Since $p$ is contained in the regular locus of $V$, 
$\mu$ is an isomorphism on some 
neighborhood of $p$.
Further the pull-back $(\mu^{*}L)|_{\widetilde{V}} $ is also
numerically trivial since $L|_{V}$ is numerically trivial. 
The same argument asserts that 
any function in 
$\rm{Psh}(\widetilde{V}, (\mu^{*}\theta) | _{\widetilde{V}})$ is always smooth. 
Note that the pull-back $\mu^{*} \widetilde {\varphi}$ is 
a $(\mu^{*} \theta) | _{\widetilde{V}}$-psh function on $V$. 
It shows that the pull-back $\mu^{*} \widetilde {\varphi}$ is 
smooth on $\widetilde{V}$. 
The function $\widetilde{\varphi }$ is also smooth at $p$
since $\mu$ is an isomorphism on some 
neighborhood of $p$.
However $\widetilde {\varphi}$ has a pole at $p$ by the construction.
This is a contradiction. 
Thus $L|_{V}$ is not numerically trivial 
even if $V$ has singularities.
\end{proof}

\begin{cor} Let $L$ be a line bundle with assumption in Theorem \ref{Main}. 
Then the intersection number $(L\cdot C)$ is positive 
for any irreducible curve $C$ on $X$.
\end{cor}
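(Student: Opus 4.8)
The plan is to obtain this as an immediate consequence of Proposition \ref{Nef} applied to the subvariety $V=C$. That proposition gives two facts: $L|_{C}$ is pseudo-effective, and $L|_{C}$ is not numerically trivial. What remains is the elementary observation that, on an irreducible projective curve, these two properties together force the degree $(L\cdot C)$ to be strictly positive.

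To make this precise when $C$ is singular, I would first pass to the normalization $\nu:\widetilde{C}\longrightarrow C$. Pulling back a $\theta|_{C}$-psh function along $\nu$ yields a function in $\mathrm{Psh}(\widetilde{C},(\nu^{*}\theta)|_{\widetilde{C}})$, so $M:=\nu^{*}(L|_{C})$ is again pseudo-effective on the smooth curve $\widetilde{C}$; since $\nu$ is finite of degree one, $\deg M=(L\cdot C)$ and $M$ is numerically trivial on $\widetilde{C}$ if and only if $L|_{C}$ is numerically trivial. Hence it suffices to show that a pseudo-effective, non-numerically-trivial line bundle $M$ on a smooth projective curve $\widetilde{C}$ has $\deg M>0$.

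For this, let $\eta$ be a smooth $(1,1)$-form representing $c_{1}(M)$ and pick $\varphi\in\mathrm{Psh}(\widetilde{C},\eta)$, which exists by pseudo-effectivity. Then $\eta+\ddbar\varphi$ is a closed positive current on the compact curve $\widetilde{C}$, so its total mass is nonnegative and depends only on its cohomology class $c_{1}(M)$ (the current $\ddbar\varphi$ integrates to zero by Stokes' theorem); therefore
\begin{equation*}
(L\cdot C)=\deg M=\int_{\widetilde{C}}\big(\eta+\ddbar\varphi\big)\geq 0 .
\end{equation*}
Equivalently, one may invoke Proposition \ref{Bou} on $\widetilde{C}$: $\V{\widetilde{C}}(M)\geq\int_{\widetilde{C}}\big(\eta+\ddbar\varphi\big)_{\ac}\geq 0$. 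Since $L|_{C}$ is not numerically trivial, $\deg M\neq 0$, and thus $(L\cdot C)>0$.

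The argument is short once Proposition \ref{Nef} is in hand; the only point requiring a little care is the reduction to the normalization in the singular case, together with the standard fact that the mass of a closed positive current on a compact curve is a cohomological invariant.
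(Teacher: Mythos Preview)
Your argument is correct and follows exactly the paper's approach: apply Proposition~\ref{Nef} to $V=C$, then use that a pseudo-effective, non-numerically-trivial line bundle on a curve has positive degree---the paper simply asserts this last fact in one sentence, while you supply the details via normalization and the mass of a closed positive current. (Your aside invoking Proposition~\ref{Bou} is unnecessary and not quite an ``equivalently,'' since $\V{\widetilde{C}}(M)\geq 0$ by itself does not yield $\deg M\geq 0$; but your primary argument already establishes the claim.)
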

\begin{proof}
Any pseudo-effective line bundle on a curve 
which is not numerically trivial is always ample. 
Thus, the corollary follows from Proposition \ref{Nef}.
\end{proof}
In oder to show $(L^{d} \cdot V)>0$ for any subvariety, 
we need only consider the case when the dimension of $V$ is larger than 
or equal to two from the above corollary.
Moreover the above corollary asserts that $L$ is 
a nef line bundle on $X$.
It is well-known that the volume of $L|_{V}$ is equal to the self-intersection 
number $(L^{d} \cdot V)$ along $V$ for a nef line bundle.
(see \cite[Section 2.2 C]{La04}).  
That is, the equality holds 
$$\V{V} (L) =(L^{d} \cdot V)$$ 
for any irreducible subvariety $V$ of dimension $d$. 
(Note the restriction of a nef line bundle is also nef.) 
Therefore for the proof of Theorem \ref{Main}, 
it is enough to show that the volume 
$\V{V}(L)$ is always positive 
for any irreducible subvariety $V$ of dimension $d\geq 2$.
From now on, we will show that the volume $\V{V}(L)$ is positive 
for a subvariety $V$ of dimension $d\geq 2$, 
by using Proposition \ref{Bou}.  

We first consider the case when $V$ is non-singular.  
Even if $V$ has singularities, the same argument can be justified 
by taking an embedded resolution of $V \subseteq X$. 
We argue the case at the end of this section.

Fix a point $p$ on $V$.
Let $(z_{1}, z_{2}, \dots, z_{d})$ be a local coordinate 
centered at $p$.
We consider an open ball $B$, which is defined by 
\begin{equation*}
B:=\big\{(z_{1}, z_{2}, \dots, z_{d}) 
\ \big|\ |z|^{2} < 1 \big\}. 
\end{equation*}
Since $\ddbar |z|^{2}$ is a strictly positive $(1,1)$-form on $B$, 
there exists a large positive number $A$ such that 
\begin{equation}
A \ddbar |z|^{2} + \theta |_{V} >0\ \ \ \ \text{on}\ B.
\label{0-0}
\end{equation}

For every point $y$ on the boundary $\partial B$ of $B$, 
we can take a curve $C_{y}$ on $V$ such that 
$C_{y}$ contains $p$ and $y$.
We can take such curve from 
Theorem \ref{curve} and the assumption that 
the dimension of $V$ is larger than or equal to two.
By Lemma \ref{inter} and the property (1) in Theorem \ref{curve}, the restriction of $L$ to $C_{y}$ is ample.
Therefore there exists a function 
$\varphi _{y} $ on $C_{y}$ with following properties:
\begin{align}
&\varphi _{y} \in \Psh{C_{y}}, \\
&\varphi _{y}(p) = -\infty,  \\
&\varphi _{y}(y) = 0, \label{0-1}  
\end{align}
Indeed, we can take a function $\varphi _{y} \in \Psh{C_{y}}$
such that $\varphi _{y}$ has a pole at $p$ and 
$\varphi _{y}$ is smooth at $y$ by Lemma \ref{potential}.
After replacing $\varphi _{y}$ by $\varphi _{y}- \varphi _{y}(y)$, 
the function satisfies the property (\ref{0-1}).
Note that the function is a $\theta |_{C_{y}}$-psh function
even if we replace $\varphi _{y}$ by $\varphi _{y}-\varphi _{y}(y)$.

Now the function ${\varphi}_{y}$ on $C_{y}$ 
can be extended to 
a $\theta |_{V}$-psh function $\widetilde{\varphi}_{y}$
 on $V$ by the assumption of 
Theorem \ref{Main}. 
In fact, we can extend ${\varphi}_{y}$ to 
a global $\theta$-psh function on $X$ by the assumption 
in Theorem \ref{Main}.
From the property (\ref{0-1}), 
the extended function does not have pole at $y$.
Thus we can restrict the function to $V$. 
The function gives the extension of ${\varphi}_{y} $ to $V$, 
which we denote by $\widetilde{\varphi}_{y}$.
Then the function $\widetilde{\varphi}_{y}$ 
satisfies the following properties:
\begin{align}
&\widetilde{\varphi}_{y} \in \Psh{V}, \label{1-1} \\
&\widetilde{\varphi}_{y}(p) = -\infty, \label{1-2} \\
&\widetilde{\varphi}_{y}(y) = 0. \label{1-3}
\end{align}

In the following step, we approximate the function 
$\widetilde{\varphi}_{y}$ with the same singularities as 
a logarithm of a sum of squares of holomorphic functions.
If the extended function $\widetilde{\varphi}_{y}$ is continuous 
on some neighborhood of $y$, this step is not necessary.
However the function $\widetilde{\varphi}_{y}$ 
may not be continuous at $y$ 
even if $\varphi_{y}$ is smooth at $y$ on $C_{y}$.
Thus the following step seems to be necessary in general.

\begin{lemm}\label{Key}
Fix a hermitian form $\omega$ on $X$. 
For every positive number $\varepsilon $ and a point 
$y \in \partial B$, 
there exist a neighborhood $U_{y}$ of $p$
which is independent of $\varepsilon $
and 
an almost psh $\widetilde{\varphi}_{y, \varepsilon }$
with following properties$:$
\begin{align}
&\theta |_{V} + \ddbar \widetilde{\varphi}_{y, \varepsilon }
\geq -\varepsilon \omega,\label{2-1} \\
&\widetilde{\varphi}_{y, \varepsilon }(y) > 0 \label{2-2} 
\ \text{and $\widetilde{\varphi}_{y, \varepsilon }$ is 
smooth on some neighborhood of $y$. }\\
&  -A>\widetilde{\varphi}_{y, \varepsilon }
\ \ \ \ {\rm{on}}\ U_{y}. \label{2-3}
\end{align}
\end{lemm}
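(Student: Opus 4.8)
The plan is to apply Demailly's regularization theorem (Theorem~\ref{appro}) to the extended function $\widetilde{\varphi}_{y}$ on $V$ and then read off the three required properties directly from the estimates (A), (B), (C), the main point being that both the pole of $\widetilde{\varphi}_{y}$ at $p$ and its finite value at $y$ persist, in a quantitatively uniform way, under this regularization.

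First I would put $\gamma := -\theta|_{V} - \tfrac{\varepsilon}{2}\,\omega|_{V}$, which is a smooth, hence continuous, $(1,1)$-form on the compact manifold $V$. Since $\widetilde{\varphi}_{y} \in \Psh{V}$ we have $\theta|_{V} + \ddbar \widetilde{\varphi}_{y} \geq 0$, so $\ddbar \widetilde{\varphi}_{y} \geq -\theta|_{V} \geq \gamma + \tfrac{\varepsilon}{2}\,\omega|_{V} > \gamma$; in particular $\widetilde{\varphi}_{y}$ is almost psh and the hypothesis of Theorem~\ref{appro} holds. Applying Theorem~\ref{appro} on $V$ with the hermitian form $\omega|_{V}$ produces almost psh functions $\varphi_{y,k}$ and a decreasing sequence $\delta_{k}\searrow 0$ satisfying (A), (B), (C). By (C) we get $\theta|_{V} + \ddbar \varphi_{y,k} \geq \theta|_{V} + \gamma - \delta_{k}\,\omega|_{V} = -(\tfrac{\varepsilon}{2}+\delta_{k})\,\omega|_{V}$, which is $\geq -\varepsilon\,\omega|_{V}$ as soon as $\delta_{k}\leq \varepsilon/2$; this gives (\ref{2-1}). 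Moreover the left-hand inequality in (A) gives $\varphi_{y,k}(y) > \widetilde{\varphi}_{y}(y) = 0$, and by (B) the function $\varphi_{y,k}$ is smooth away from an analytic subset of $V$ contained in the polar set of $\widetilde{\varphi}_{y}$; since $\widetilde{\varphi}_{y}(y)\neq -\infty$, this subset does not contain $y$, so $\varphi_{y,k}$ is smooth on a neighborhood of $y$. This gives (\ref{2-2}).

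For (\ref{2-3}) I would use the upper semicontinuity of $\widetilde{\varphi}_{y}$ together with $\widetilde{\varphi}_{y}(p) = -\infty$: there is a radius $\rho \in (0,1)$, depending only on $\widetilde{\varphi}_{y}$, $p$ and $A$ but not on $\varepsilon$, such that $\widetilde{\varphi}_{y} < -(A+2)$ on the ball $\{|z| < \rho\}$, which is contained in $B$. Set $U_{y} := \{|z| < \rho/2\}$, a neighborhood of $p$ independent of $\varepsilon$, and fix once and for all a radius $r$ with $0 < r < \rho/2$ and $Cr < 1/3$, where $C$ is the constant occurring in (A). Then for $x \in U_{y}$ and $|\zeta - x| < r$ one has $|\zeta| < \rho$, hence $\sup_{|\zeta - x| < r}\widetilde{\varphi}_{y}(\zeta) \leq -(A+2)$, and the right-hand inequality in (A) yields $\varphi_{y,k}(x) \leq -(A+2) + C\bigl(\tfrac{|\log r|}{k} + r + \delta_{k}\bigr) < -(A+2) + 1 = -(A+1) < -A$, provided $k$ is large enough that $C|\log r|/k < 1/3$ and $C\delta_{k} < 1/3$. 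Choosing a single index $k$ that meets all of the finitely many lower bounds above and setting $\widetilde{\varphi}_{y,\varepsilon} := \varphi_{y,k}$ then completes the construction, with $U_{y}$ independent of $\varepsilon$ as required.

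The one genuinely delicate point is that Theorem~\ref{appro} requires the strict inequality $\ddbar \widetilde{\varphi}_{y} > \gamma$, whereas a priori we only control $\ddbar \widetilde{\varphi}_{y} \geq -\theta|_{V}$; this is the reason for letting $\gamma$ depend on $\varepsilon$, taking $\gamma = -\theta|_{V} - \tfrac{\varepsilon}{2}\,\omega|_{V}$ rather than $-\theta|_{V}$, so that strictness holds while the loss $\delta_{k}\,\omega|_{V}$ in (C) can still be absorbed into $-\varepsilon\,\omega|_{V}$. Apart from that, everything is bookkeeping with the two-sided estimate (A): its lower bound $\varphi_{y,k} > \widetilde{\varphi}_{y}$ keeps the value at $y$ positive and keeps $y$ off the singular locus, while its upper bound, together with upper semicontinuity at $p$, forces $\varphi_{y,k}$ to be as negative as we wish on the fixed neighborhood $U_{y}$ once $r$ is small and $k$ is large. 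I do not expect any further obstacle.
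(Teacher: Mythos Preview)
Your proposal is correct and follows essentially the same route as the paper: apply Demailly's regularization (Theorem~\ref{appro}) to $\widetilde{\varphi}_{y}$ on $V$, obtain (\ref{2-1}) from (C) by choosing $k$ large, obtain (\ref{2-2}) from the left inequality in (A) together with (B), and obtain (\ref{2-3}) by combining the right inequality in (A) with the upper semicontinuity of $\widetilde{\varphi}_{y}$ at its pole $p$; the paper's choice $r_{3}=\min\{r_{1},r_{2}\}$ plays exactly the role of your $\rho/2$. You are in fact slightly more careful than the paper on one point: you perturb $\gamma$ to $-\theta|_{V}-\tfrac{\varepsilon}{2}\omega|_{V}$ so that the \emph{strict} inequality $\ddbar\widetilde{\varphi}_{y}>\gamma$ required in Theorem~\ref{appro} genuinely holds, whereas the paper simply sets $\gamma=-\theta|_{V}$ and implicitly uses that the regularization works with $\geq$; since in Demailly's construction the approximants $\varphi_{k}$ and the constant $C$ in (A) depend only on $\varphi$ and the coordinate covering (not on $\gamma$), your $\varepsilon$-dependent $\gamma$ does not disturb the $\varepsilon$-independence of $U_{y}$.
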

\begin{proof}
By applying Theorem \ref{appro} to 
$\varphi = \widetilde{\varphi}_{y}$ and 
$\gamma = - \theta |_{V}$, 
we obtain almost psh functions 
$\{ \widetilde{\varphi}_{y, k } \}_{k=1}^{\infty}$
with the properties in Theorem \ref{appro}.
For a given $\varepsilon $, 
by taking a sufficiently large $k=k(\varepsilon ,y)$, 
the property (\ref{2-1}) holds from the property (C).

From the left side inequality of the property (A) 
in Theorem \ref{appro} and 
(\ref{1-3}), 
we can easily check the property (\ref{2-2}) for every 
positive integer $k$.
In fact the property $(B)$ implies that 
if $\widetilde{\varphi}_{y, k }$ does not have a pole 
at a point, 
$\widetilde{\varphi}_{y, k }$ is smooth at the point .
In particular, $\widetilde{\varphi}_{y, k }$ is smooth 
on some neighborhood of $y$.
In order to show the existence of $U_{y}$ with 
the property (\ref{2-3}), we estimate 
the right hand inequality of the property (A).
We can easily show that 
there exists a sufficiently small $r_{1} > 0$ which does not 
depend on $\varepsilon$
such that 
\begin{equation}
0 < C(\dfrac{|\log {r_{1}}|}{k} + r_{1} +{\delta  _{k}})< A 
\ \ \ \ \text{for any }k \geq \lceil \dfrac{1}{r_{1}} \rceil. \label{5}
\end{equation}
Here $\lceil \cdot  \rceil$ means round up of a real number.
Indeed, for any $k \geq \lceil \dfrac{1}{r_{1}} \rceil$,  we have 
\begin{equation*}
\dfrac{|\log {r_{1}}|}{k} + r_{1} +{\delta  _{k}}
\leq r_{1}|\log {r_{1}}| + r_{1} +{\delta  _{k}}.
\end{equation*}
Now $C$ depends on the choice of coordinate open sets 
covering $V$. 
However $C$ is independent of $\varepsilon$. 
(We may assume that 
the coordinate open set 
$\big( B, (z_{1}, z_{2}, \dots, z_{d}) \big)$ is 
a member 
of coordinate open sets covering $V$.)
Therefore the inequality (\ref{5}) holds 
for a sufficiently small $r_{1}$ 
which is independent of $\varepsilon$. 

On the other hand, 
$\widetilde{\varphi}_{y}$ has a pole at $p$ 
by (\ref{1-2}). 
Thus we have
\begin{equation*}
\sup_{|z -z(p)| < r_{2}}
\widetilde{\varphi}_{y}(z) <-2A 
\end{equation*}
for a sufficiently small $r_{2}>0$.
Here we used upper semi-continuity of  
$\widetilde{\varphi}_{y}$.
Then we define $U_{y}$ to be 
$$U_{y}:=
\big\{z \in B \ \ \big| \ |z-z(p)|<r_{3} \big\},$$ 
where $r_{3}$ is $\min \{r_{1}, r_{2}\}$.
Then the right hand of the property (A) in Theorem \ref{appro}
is strictly smaller than $A$ for any $k \geq \lceil \dfrac{1}{r_{3}} \rceil$. 
We emphasize that $r_{1}$ and $r_{2}$ do not depend on $\varepsilon$.
Therefore we obtain $U$ with the property (\ref{2-3}). 
\end{proof}
By using these functions, 
we construct an almost psh function whose 
value at $p$ is smaller than values on the boundary $\partial B$ of $B$.
From the property (\ref{2-2}), 
there exists 
a neighborhood $W_{y}$ of $y$ such that 
\begin{equation*}
\widetilde{\varphi}_{y, \varepsilon }>0
\ \ \ {\text {on}}\ W_{y}.
\end{equation*}
Since $\partial B$ is a compact set, 
we can cover $\partial B$ by 
finite members $\big\{ W_{y_{i}} \big\}_{i=1}^{N}$.
Now we define a function $\Phi _{\varepsilon }$ to be 
$$\Phi _{\varepsilon } := 
\max_{i=1, \dots. N}\big\{ 
\widetilde{\varphi}_{y_{i}, \varepsilon }  \big\}.$$
\begin{lemm}\label{Key2}
Then the function $\Phi _{\varepsilon }$ satisfies 
the following properties:
\begin{align}
& \theta |_{V} + \ddbar \Phi _{\varepsilon } 
\geq - \varepsilon \omega .\label{3-1} \\ 
& \Phi _{\varepsilon } > 0\ 
\textrm{on some neighborhood}\ V_{\varepsilon }\ \textrm{of}\ \partial B.
\label{3-2} \\
& -A > \Phi _{\varepsilon } \ 
\textrm{on some neighborhood}\ U\ \textrm{of}\ p.
\label{3-3}
\end{align}
\end{lemm}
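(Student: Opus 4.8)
The plan is to read off each of the three properties of $\Phi_{\varepsilon}=\max_{i=1,\dots,N}\widetilde{\varphi}_{y_{i},\varepsilon}$ directly from the corresponding property of the individual functions $\widetilde{\varphi}_{y_{i},\varepsilon}$ supplied by Lemma \ref{Key}, using only that the maximum of finitely many functions respects upper semi-continuity, local potentials, and strict inequalities. So the proof is essentially the bookkeeping of finite unions and intersections of neighborhoods.

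For (\ref{3-1}): each $\widetilde{\varphi}_{y_{i},\varepsilon}$ is almost psh and satisfies (\ref{2-1}), and the maximum of finitely many upper semi-continuous functions is upper semi-continuous, so $\Phi_{\varepsilon}$ is almost psh. Since $X$ is projective we may take $\omega$ to be a K\"ahler form; then on a small coordinate ball the closed $(1,1)$-form $\theta|_{V}+\varepsilon\omega$ has a smooth potential $\psi$, each $\psi+\widetilde{\varphi}_{y_{i},\varepsilon}$ is psh there, hence so is $\psi+\Phi_{\varepsilon}=\max_{i}(\psi+\widetilde{\varphi}_{y_{i},\varepsilon})$, exactly as in the proof of Lemma \ref{max}. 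This gives $\theta|_{V}+\ddbar\Phi_{\varepsilon}\geq-\varepsilon\omega$ on $X$.

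For (\ref{3-2}) and (\ref{3-3}): by (\ref{2-2}), around each $y_{i}$ there is an open neighborhood $W_{y_{i}}$ with $\widetilde{\varphi}_{y_{i},\varepsilon}>0$ on $W_{y_{i}}$, and the finitely many $W_{y_{1}},\dots,W_{y_{N}}$ already chosen cover the compact set $\partial B$; I would put $V_{\varepsilon}:=W_{y_{1}}\cup\dots\cup W_{y_{N}}$, an open neighborhood of $\partial B$, on which $\Phi_{\varepsilon}(z)\geq\widetilde{\varphi}_{y_{j},\varepsilon}(z)>0$ whenever $z\in W_{y_{j}}$, which is (\ref{3-2}). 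For (\ref{3-3}), the neighborhoods $U_{y_{i}}$ of $p$ from Lemma \ref{Key} satisfy $\widetilde{\varphi}_{y_{i},\varepsilon}<-A$ on $U_{y_{i}}$ by (\ref{2-3}); setting $U:=U_{y_{1}}\cap\dots\cap U_{y_{N}}$, a finite intersection of open neighborhoods of $p$ and hence again such a neighborhood, we get $\Phi_{\varepsilon}=\max_{i}\widetilde{\varphi}_{y_{i},\varepsilon}<-A$ on $U$.

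The only step needing a moment's care is (\ref{3-1}): one must check that passing to the maximum does not spoil the positivity estimate, and this is precisely the local-potential argument of Lemma \ref{max}, which is why it is convenient to have fixed $\omega$ K\"ahler (so that $\theta|_{V}+\varepsilon\omega$ admits local potentials). Note also that the $U$ produced this way is allowed to depend on $\varepsilon$, since the finite subcover $\{W_{y_{i}}\}$ does, and the statement of Lemma \ref{Key2} requires nothing stronger; everything else is routine.
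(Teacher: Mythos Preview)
Your argument is correct and is exactly the paper's: property~(\ref{3-1}) via Lemma~\ref{max} applied to $\gamma=\theta|_{V}+\varepsilon\omega$, property~(\ref{3-2}) from the finite cover $\{W_{y_i}\}$ of $\partial B$, and property~(\ref{3-3}) by setting $U:=\bigcap_{i=1}^{N}U_{y_i}$. Two minor remarks: you do not need $\omega$ to be K\"ahler for $\theta|_{V}+\varepsilon\omega$ to admit local potentials---any smooth $d$-closed $(1,1)$-form does, which is all Lemma~\ref{max} uses; and while you are right that Lemma~\ref{Key2} as stated does not require $U$ to be independent of $\varepsilon$, the paper's Remark immediately following the lemma asserts (and the volume estimate essentially uses) that the $U$ built as $\bigcap_i U_{y_i}$ can be taken independent of $\varepsilon$, so your closing comment is at odds with what the argument needs next.
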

\begin{proof}
The property (\ref{3-1}) follows from Lemma \ref{max} and 
the property (\ref{2-1}).
The property (\ref{3-2}) is clear 
by the definition of $\Phi _{\varepsilon }$ and 
the property (\ref{2-2}).
If a neighborhood $U$ is defined 
to be $U:= \cap_{i=1}^{N} U_{y_{i}} $, 
the property (\ref{3-3}) holds from the property (\ref{2-3}). 
Here $U_{y_{i}}$ is a neighborhood of $p$ 
with the  property (\ref{2-3}) in Lemma \ref{Key}.
\end{proof}
\begin{rem}
We can assume that 
a neighborhood $U$ in the property (\ref{3-3}) does 
not depend on $\varepsilon $.
It follows from the definition of $U$ in the proof of 
Lemma \ref{Key2}.
The fact is essentially important in the estimation of the volume 
$\V{V}(L)$ with current integrations.
\end{rem}
We want to construct a almost psh function 
whose Levi-form is strictly positive on some neighborhood
of $p$.
The integral of the Levi-form would imply that the volume 
$\V{V}(L)$ is positive.
For this purpose, 
we define a new function $\Psi _{\varepsilon }$ on $V$ 
as follows:
\begin{eqnarray}
\Psi _{\varepsilon }:=\left\{ \begin{array}{ll}
\Phi _{\varepsilon }\ \ \ \ \ \ \ \ &\text{on}\ V\setminus B \\
\max \big\{ \Phi _{\varepsilon }, A|z|^{2}-A \big\}
\ \ \ \ \ \ \ &\text{\ on }\ B. \label{6}
\end{array} \right.
\end{eqnarray}
Then the function $\Psi   _{\varepsilon }$ satisfy 
the following properties:

\begin{lemm}\label{Key3}
The function $\Psi   _{\varepsilon }$ satisfies 
the following properties:
\begin{align}
&\theta |_{V} + \ddbar \Psi   _{\varepsilon } 
\geq - \varepsilon \omega , \label{4-1} \\ 
&\Psi _{\varepsilon } = A |z|^{2}-A\ 
\text{on}\ U,\label{4-2}  
\end{align}
\hspace{0.1cm} where $U$ is a neighborhood of $p$ 
which is independent of $\varepsilon$.
\end{lemm}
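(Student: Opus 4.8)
The plan is to read off the two assertions directly from the definition (\ref{6}) of $\Psi_{\varepsilon}$ together with the three properties of $\Phi_{\varepsilon}$ recorded in Lemma \ref{Key2}, namely (\ref{3-1}), (\ref{3-2}) and (\ref{3-3}). The only point that requires real care is that the two branches in (\ref{6}) have to be shown to patch together into a single well-defined function on $V$; the rest is a short case analysis together with one more application of the maximum construction.

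First I would treat (\ref{4-2}), the easier of the two. Let $U$ be the neighborhood of $p$ produced in the proof of Lemma \ref{Key2}; it is contained in $B$ and, by the remark following that lemma, it does not depend on $\varepsilon$. On $U$ one has $A|z|^{2}-A\geq -A$ because $|z|^{2}\geq 0$, whereas $\Phi_{\varepsilon}<-A$ by (\ref{3-3}); hence $A|z|^{2}-A>\Phi_{\varepsilon}$ throughout $U$, so $\Psi_{\varepsilon}=\max\{\Phi_{\varepsilon},\,A|z|^{2}-A\}=A|z|^{2}-A$ on $U$, which is precisely (\ref{4-2}).

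For (\ref{4-1}) I would argue in two steps. First, I would check that $\Psi_{\varepsilon}$ is a well-defined almost psh function on $V$. By (\ref{3-2}) we have $\Phi_{\varepsilon}>0$ on a neighborhood $V_{\varepsilon}$ of $\partial B$, while $A|z|^{2}-A<0$ on $B$; hence on $V_{\varepsilon}\cap B$ the two branches of (\ref{6}) coincide, both being equal to $\Phi_{\varepsilon}$, so $\Psi_{\varepsilon}$ is well defined on all of $V$, it is almost psh because it is locally a maximum of almost psh functions, and it equals $\Phi_{\varepsilon}$ on a whole neighborhood of $\partial B$. Second, since (\ref{4-1}) is a local statement, I would verify it separately on $B$ and on a neighborhood of $V\setminus B$. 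On a neighborhood of $V\setminus B$, which also covers $\partial B$ where $\Psi_{\varepsilon}=\Phi_{\varepsilon}$, the inequality is just (\ref{3-1}). On $B$ it follows from the maximum construction: $\Phi_{\varepsilon}$ satisfies $\theta|_{V}+\ddbar\Phi_{\varepsilon}\geq-\varepsilon\omega$ by (\ref{3-1}), while $A|z|^{2}-A$ satisfies the stronger inequality $\theta|_{V}+\ddbar(A|z|^{2}-A)=\theta|_{V}+A\ddbar|z|^{2}>0\geq-\varepsilon\omega$ by (\ref{0-0}); since the maximum of two functions obeying such a bound again obeys it (the proof of Lemma \ref{max} carries over, the statement being local), we conclude $\theta|_{V}+\ddbar\Psi_{\varepsilon}\geq-\varepsilon\omega$ on $B$. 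Combining the two cases proves (\ref{4-1}).

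The construction of $\Psi_{\varepsilon}$ being already given, I do not expect a genuine obstacle. The one step to be careful about is the well-definedness of $\Psi_{\varepsilon}$, that is, using (\ref{3-2}) to see that the two formulas in (\ref{6}) agree near $\partial B$; everything else reduces to the elementary inequality $A|z|^{2}-A\geq-A$ on $B$ and the routine fact that the bound $\theta|_{V}+\ddbar(\,\cdot\,)\geq-\varepsilon\omega$ is stable under taking maxima.
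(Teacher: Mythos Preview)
Your proposal is correct and follows essentially the same line as the paper's proof: both deduce (\ref{4-2}) from the comparison $\Phi_{\varepsilon}<-A\leq A|z|^{2}-A$ on $U$ via (\ref{3-3}), and both obtain (\ref{4-1}) by combining (\ref{0-0}) and (\ref{3-1}) through Lemma~\ref{max} on $B$, while using (\ref{3-2}) to see that near $\partial B$ the maximum reduces to $\Phi_{\varepsilon}$ so the two branches of (\ref{6}) glue. Your only addition is making the well-definedness step more explicit, which is harmless.
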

\begin{proof}
From the property (\ref{3-3}), we have 
$\Phi _{\varepsilon } < -A$ for some neighborhood $U$ of $p$ 
which is independent of $\varepsilon$.
Therefore the inequality 
$$\Phi _{\varepsilon } < -A \leq A |z|^{2}-A$$ 
holds on $U$. 
Thus the property (\ref{4-2}) holds.

Further by the choice of $A$, the $(1,1)$-form 
$A \ddbar |z|^{2} + \theta |_{V} $ is strictly positive 
on the neighborhood $B$ of $p$.
In particular, 
\begin{equation*}
A \ddbar |z|^{2} + \theta |_{V} \geq -\varepsilon \omega
\end{equation*} holds.
Hence 
we have $$\theta |_{V} + \ddbar 
\max \big\{ \Phi _{\varepsilon }, A|z|^{2}-A \big\}
\geq -\varepsilon \omega \ \ \ \  \text{on}\ B$$
from Lemma \ref{max} and the property (\ref{3-1}).

On the other hand, we obtain 
$$\max \big\{ \Phi _{\varepsilon }, A|z|^{2}-A \big\} = 
\Phi _{\varepsilon }$$
on some neighborhood of $\partial B$
from the property (\ref{3-2}).
Therefore the function 
$\Psi _{\varepsilon }$ satisfies
\begin{equation*}
\theta |_{V} + \ddbar \Psi   _{\varepsilon } = 
\theta |_{V} + \ddbar \Phi   _{\varepsilon } \geq 
-\varepsilon \omega
\end{equation*}
on the neighborhood of $\partial B$ from the property (\ref{3-1}).
Thus the property (\ref{4-1}) holds on $X$.
\end{proof}
Finally, we estimate the volume $\V{V}(L)$ of $L$ 
with current integrations for the computation of the intersection number $(L^{d}\cdot V)$.
The function $\Psi _{\varepsilon }$ is a 
$(\theta |_{V} + \varepsilon \omega )$-psh function
by the property (\ref{4-1}).
Here $\omega$ can be assumed to be 
a K${\rm{\ddot{a}}}$hler form which 
represents the first Chern class $c_{1}(B)$ of $B$, 
where $B$ is an ample line bundle on $V$. 
The $d$-closed $(1,1)$-form 
$(\theta |_{V} + \varepsilon \omega)$ represents the first Chern
class $c_{1}(L)+\varepsilon c_{1}(B)$.
Thus by Proposition \ref{Bou}, 
we have 
\begin{equation*}
\V{{V}}
\big( L+ \varepsilon B  \big) 
\geq 
\int_{V} 
\big( \theta |_{V} +  \varepsilon \omega +
\ddbar \Psi  _{\varepsilon } \big)_{\ac}^{d}.
\end{equation*}
Since $(\theta |_{V} +  \varepsilon \omega +
\ddbar \Psi  _{\varepsilon } \big)$ is a positive current, 
the absolute continuous part is (semi)-positive.
It shows 
\begin{equation*}
\V{V}
\big( L + \varepsilon B \big) \geq 
\int_{U} 
\big(\theta |_{V}+  \varepsilon \omega +
\ddbar \Psi _{\varepsilon } \big)_{\ac}^{d}, 
\end{equation*}
where $U$ is a neighborhood of $p$ which satisfies 
the properties in Lemma \ref{Key3}.
If we let $\varepsilon $ tend to zero,
the left hand of the above inequality converges to 
$\V{V}(L)$ from the continuity of the volume.
Thus we have 
\begin{align*}
\V{V}
\big( L \big)& \geq 
\liminf _{\varepsilon \to 0}
\int_{U} 
\big(\theta |_{V} +  \varepsilon \omega +
\ddbar \Psi _{\varepsilon }  \big)_{\ac}^{d}\\
&\geq \int_{U} 
\liminf _{\varepsilon \to 0} 
\big(\theta |_{V} + \varepsilon \omega +
 \ddbar \Psi  _{\varepsilon } \big)_{\ac}^{d}\\
&= \int_{U} 
\big(\theta |_{V} +
\ddbar A|z|^{2} \big)_{\ac}^{d}.
\end{align*}
The second inequality follows form Fatou's lemma.
Here we used the fact that $U$ does not shrink 
even if $\varepsilon$ goes to $0$, 
since $U$ is independent of $\varepsilon$, 
 The equality follows from 
the property (\ref{4-2}).
By the choice of $A$ (see (\ref{0-0})), the right hand of the above inequality
$$ \int_{U} 
\big(\theta |_{V} +\ddbar A|z|^{2} \big)_{\ac}^{d}
= \int_{U} 
\big(\theta |_{V} +\ddbar A|z|^{2} \big)^{d}$$ is positive.
Hence we proved the volume $\V{V}(L)$ is positive 
for a non-singular subvariety $V$.

When $V$ has singularities, we take an embedded resolution
$\mu :\widetilde {V} \subseteq  \widetilde {X}  \longrightarrow V \subseteq X$. 
 Then we can show that 
${\rm{vol}}_{\it{\widetilde {V}}} (\mu^{*}L) >0$ 
by the same argument as above. 
Note that we used only the following property 
on the line bundle $L$ in the above argument:

$(*)$ For every point 
$y \in \partial B$, 
there exists a 
$\theta |_{V}$-psh function  
$\widetilde{\varphi} _{y}$  
such that $\widetilde{\varphi} _{y} (p) = -\infty$ 
and
$\widetilde{\varphi} _{y}(y)=0$.

We can easily show that 
the property $(*)$ holds for the pull-back $\mu^{*}L$ of $L$ 
as follows: 
We first choose a point $p$ on $\widetilde {V}$ such that 
$\mu$ is an isomorphism on a neighborhood $B$ of $p$.
For every point $y \in \partial B$, 
we consider a curve $C_{y}$ on $\widetilde {V}$ 
which contains the points $p$ and $y$.  
Since $\mu$ is an isomorphism on $B$, 
the push-forward $\mu(C_{y})$ is not a point. 
Therefore it follows that the intersection number 
$\big( L \cdot \mu(C_{y}) \big) $ is positive 
from Proposition \ref{Nef}.
Lemma \ref{potential} implies that there exists 
a $\theta |_{\mu(C_{y})}$-psh function $\varphi _{y}$ 
such that $\varphi _{y} \big( \mu (p) \big) = -\infty $ and
$\varphi _{y} (\mu(y))=0$.
By the assumption of Theorem \ref{Main} on $L$, 
we can extend $\varphi _{y}$ 
to a global $\theta$-psh function $\widetilde{\varphi} _{y}$ 
on $X$. 
Then the pull-back 
$\mu^{*} \widetilde{\varphi} _{y}$ of 
$\widetilde{\varphi} _{y}$
satisfies 
property $(*)$ which we want to obtain. 
By the same argument as above, we obtain 
${\rm{vol}}_{\it{\widetilde {V}}} (\mu^{*}L)  >0 $.
Since the restriction $\mu |_{\widetilde {V}}$ a birational morphism from $\widetilde {V}$ to $V$, 
$\V{V}(L) = {\rm{vol}}_{\it{\widetilde {V}}} (\mu^{*}L)$ 
(see \cite[Proposition 2.2.43]{La04}).
Hence we proved the volume $\V{V}(L) > 0$ 
for any subvariety $V$, 
even if $V$ has singularities.

Since $L$ is a nef line bundle, the volume of $L$ on $V$ 
coincides with the intersection number $(L^{d}\cdot V)$. 
Therefore $L$ is an ample line bundle by 
the Nakai-Moishezon-Kleiman criterion.

\section{Example}
\begin{ex}\label{Ex}
(This example shows us that there exists
a $\theta $-psh function on some subvariety
which can not extended to a global $\theta $-psh function
even if $L$ is semi-ample and big.)

Let $\pi: X:={\rm{Bl_{p}}}(\mathbb{P}^{2})\longrightarrow \mathbb{P}^{2}$ be a blow-up along a point 
$p \in \mathbb{P}^{2}$ 
and $L$ the pull-back of the hyperplane bundle by $\pi$.
Then $L$ is a semi-ample and big.
(However $L$ is not ample.)
We denote by $\theta $ the pull-back of the Fubini-Study form
on $\mathbb{P}^{2}$.
Note that $\theta $ represents the first Chern class 
$c_{1}(L)$ of $L$.
By the definition of $\theta $, the restriction of $\theta $
to $E$ is zero $(1,1)$-form, 
where $E$ is the exceptional divisor.
Therefore any $\theta |_{E}$-psh on $E$ is constant 
by the maximum principle of psh functions.
It says that a global $\theta $-psh function has the same value 
along $E$.

Now we denote by $C$ an irreducible curve on $X$ which 
intersects $E$ with at least two points.
Then $C$ is not contractive by $\pi$.
Therefore the degree of $L$ on $C$ is positive
by the projection formula
(that is, $L|_{C}$ is ample on $C$).
It implies that there exist 
many $\theta |_{C}$-psh functions on $C$.
In particular, there exists a $\theta |_{C}$-psh function which 
has different values at intersection points with $E$. 
Indeed, we can such function by using Lemma \ref{potential}. 
Such function can not extend to a global $\theta $-psh 
function.
\end{ex}

\begin{ex}\label{Ex2}
(Relations between the restricted volume of a line bundle and 
the extendability of $\theta $-psh functions.)\\ \ 
The restricted volume of $L$ along a subvariety 
$V$ is defined to be 
\begin{equation*}
\mathrm{vol_{ \it{X} | \it{V}}}(L)
 = \limsup_{k \to \infty} \dfrac
{\dim  H^{0} \big( X|V, \mathcal{O}(kL) \big) }{k^d/d!}, 
\end{equation*}
where 
\begin{equation*}
 H^{0} \big( X|V, \mathcal{O}(kL) \big) =
\mathrm{Im} \Big( H^{0}\big(X,\OX(kL) \big) \longrightarrow H^{0}\big(V,\OV(kL) \big) \Big). 
\end{equation*}

The restricted volume measures the number of sections of $\OV(kL)$ 
which can be extended to $X$.
Due to \cite{Mat10}, restricted volumes can be expressed 
with current integrations as follows 
(see \cite{Mat10} for details). 
\begin{theo}[{\cite[Theorem 1.2]{Mat10}}]
Assume that $V$ is not contained in the augmented base locus $ {\mathbb{B}}_{+}(L)$.
Then the restricted volume of $L$ along $V$ satisfies the following equality :
\begin{equation*}
\mathrm{vol_{\it{X} | \it{V} }}(L) = \sup _{\varphi \in \PshX} 
\int_{V_{\reg}} { (\theta |_{V_{\reg}}+ 
\ddbar \varphi |_{V_{\reg} })^{d}_{\ac}} ,
\end{equation*}
for $\varphi $ ranging among $\theta $-psh functions on $X$ with analytic singularities whose singular locus does not contain 
$V$.
\end{theo}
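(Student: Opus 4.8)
The plan is to prove the two inequalities separately, following the scheme by which Boucksom establishes the unrestricted identity (the equality case of Proposition \ref{Bou}, i.e. $\V{Y}(M) = \sup_\varphi \int_Y (\eta + \ddbar\varphi)^d_{\ac}$), but inserting an Ohsawa--Takegoshi extension step that upgrades sections of $kL|_V$ to sections which lift to $X$, i.e. to elements of $H^0(X|V, kL)$. The hypothesis $V \not\subset \mathbb{B}_+(L)$ is exactly what makes $\mathrm{vol}_{X|V}$ behave well (positivity, continuity, birational invariance, Fujita approximation) throughout the argument.

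For the inequality $\mathrm{vol}_{X|V}(L) \le \sup_\varphi \int_{V_{\reg}}(\theta|_{V_{\reg}} + \ddbar\varphi|_{V_{\reg}})^d_{\ac}$, I would use a Fujita-type approximation for restricted volumes (due to Ein, Lazarsfeld, Musta\c{t}\u{a}, Nakamaye and Popa). Fix a smooth metric on $L$ with curvature $\theta$. For $k$ sufficiently divisible, let $W_k \subseteq H^0(V, kL|_V)$ be the image of $H^0(X, kL)$, pick $s_0, \dots, s_{N_k} \in H^0(X, kL)$ whose restrictions form a basis of $W_k$, and set $\varphi_k := \tfrac{1}{2k}\log\sum_j |s_j|^2$ in the fixed metric. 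Then $\varphi_k$ is a $\theta$-psh function with analytic singularities, and its singular locus meets $V$ in the base locus of $|W_k|$, which is a proper subvariety of $V$ for $k \gg 0$ (using $V \not\subset \mathbb{B}_+(L)$, so $W_k \neq 0$); hence $\varphi_k$ is admissible. Passing to an embedded resolution of $V$ on which the pulled-back system $|W_k|$ splits as a free part $M_k$ plus a fixed divisor, one computes $\int_{V_{\reg}}(\theta|_{V_{\reg}} + \ddbar\varphi_k|_{V_{\reg}})^d_{\ac} = (M_k^d)/k^d$, and the restricted Fujita approximation gives $\sup_k (M_k^d)/k^d = \mathrm{vol}_{X|V}(L)$. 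This yields the claimed inequality.

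For the reverse inequality, fix an admissible $\varphi$ and an ample line bundle $A$ with K\"ahler form $\omega$. Choose an embedded resolution $\pi : \widetilde{X} \supseteq \widetilde{V} \to X \supseteq V$ that simultaneously resolves the singularities of $V$ and principalizes the ideal sheaf of $\varphi$, so that $\pi^*(\theta + \ddbar\varphi) = \beta + [E]$ with $E$ effective and $\beta$ smooth; since $\theta + \ddbar\varphi \ge 0$ the form $\beta$ is semipositive, and $\widetilde{V} \not\subset \mathrm{Supp}\, E$ because the singular locus of $\varphi$ avoids the generic point of $V$. For $k \gg 0$, apply singular holomorphic Morse inequalities (Bonavero, as used for Proposition \ref{Bou}) on $\widetilde{V}$ to the metric on $k(\pi^*L + \varepsilon A)|_{\widetilde{V}}$ of curvature current $k(\beta + [E] + \varepsilon\omega)$: its absolutely continuous part $\beta + \varepsilon\omega$ is strictly positive, so the curvature index is $0$ throughout and the inequality produces at least $\tfrac{k^d}{d!}\int_{\widetilde{V}}(\beta + \varepsilon\omega)^d - o(k^d)$ sections of $k(\pi^*L + \varepsilon A)|_{\widetilde{V}}$ lying in the multiplier ideal $\mathcal{I}(k\pi^*\varphi|_{\widetilde{V}})$. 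Since $\beta + [E] + \varepsilon\omega \ge \varepsilon\omega$ is a K\"ahler current, the Ohsawa--Takegoshi--Manivel extension theorem lifts these sections to $\widetilde{X}$ (the discrepancy between $K_{\widetilde{X}}$ and $K_{\widetilde{V}}$ is independent of $k$ and is absorbed for $k$ large), so $\dim H^0(\widetilde{X}|\widetilde{V}, k(\pi^*L + \varepsilon A)) \ge \tfrac{k^d}{d!}\int_{\widetilde{V}}(\beta + \varepsilon\omega)^d - o(k^d)$. Dividing by $k^d/d!$, taking $\limsup$, and using birational invariance of restricted volumes gives $\mathrm{vol}_{X|V}(L + \varepsilon A) = \mathrm{vol}_{\widetilde{X}|\widetilde{V}}(\pi^*(L + \varepsilon A)) \ge \int_{\widetilde{V}}(\beta + \varepsilon\omega)^d \ge \int_{\widetilde{V}}\beta^d = \int_{V_{\reg}}(\theta|_{V_{\reg}} + \ddbar\varphi|_{V_{\reg}})^d_{\ac}$; letting $\varepsilon \to 0$ and invoking continuity of the restricted volume on the open cone $\{\, V \not\subset \mathbb{B}_+(\,\cdot\,)\,\}$ completes the argument.

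The main obstacle is this extension step: turning the sections on $\widetilde{V}$ produced by the Morse inequalities into genuine members of the restricted series $H^0(\widetilde{X}|\widetilde{V}, \cdot)$ while keeping precise quantitative control of the adjunction correction and of the loss of positivity so that both stay negligible in the leading term as $k \to \infty$, and then legitimizing the limit $\varepsilon \to 0$ via continuity of the restricted volume. By contrast, the inequality $\le$ is comparatively formal, resting on the restricted Fujita approximation together with the direct evaluation of the current integral attached to a $\varphi_k$ built from sections.
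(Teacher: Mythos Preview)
The paper does not prove this statement at all: it is quoted verbatim from \cite[Theorem~1.2]{Mat10} as background for Example~\ref{Ex2}, with the explicit remark ``see \cite{Mat10} for details.'' There is therefore no in-paper proof to compare your proposal against.

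That said, your outline is essentially the strategy carried out in \cite{Mat10}. The inequality $\mathrm{vol}_{X|V}(L)\le\sup_\varphi(\cdots)$ is obtained there exactly as you describe, by producing $\theta$-psh functions $\varphi_k$ from generating sections of the restricted linear series and invoking restricted Fujita approximation. For the reverse inequality, \cite{Mat10} also passes to an embedded resolution, applies Bonavero's singular Morse inequalities on $\widetilde V$, and then uses an Ohsawa--Takegoshi--Manivel type extension to lift the resulting sections to $\widetilde X$; the $\varepsilon$-perturbation by an ample class and the final appeal to continuity of the restricted volume on the big cone away from $\mathbb B_+$ are likewise part of that argument.

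One point where your sketch is a bit loose: after blowing up, $\pi^*\omega$ is only semipositive on $\widetilde X$, so the phrase ``$\beta+[E]+\varepsilon\omega\ge\varepsilon\omega$ is a K\"ahler current'' needs care. In practice one either works with a genuine K\"ahler form on $\widetilde X$ (writing $\pi^*\omega$ as such a form plus an exceptional correction absorbed into the $o(k^d)$), or one applies the extension theorem in a form that only requires semipositivity together with bigness. This is handled in \cite{Mat10}, but it is precisely the ``adjunction correction and loss of positivity'' bookkeeping you flag as the main obstacle, and it deserves an explicit sentence rather than a parenthetical.
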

 The right hand integral measures 
Monge-Amp\`ere products of $\theta |_{V}$-psh functions on $V$
which can be extended to $X$.
On the other hand, the volume of the line bundle $L|_{V}$ 
can also be expressed 
with current integrations 
(see Proposition \ref{Bou} and \cite{Bou02}).
If any $\theta |_{V}$-psh function can be extended to a 
global $\theta $-psh function, 
the restricted volume along $V$ 
and the volume on $V$ coincides.
However there exists an example such that they are different
even if $V$ is not contained in the augmented base locus.

For example, when $X$ is a surface, 
a big line bundle $L$ admits a Zariski decomposition. 
That is, there exist nef $\mathbb{Q}$-divisor $P$ and 
effective $\mathbb{Q}$-divisor $N$ 
such that 
\begin{equation*}
H^{0}(X, \OX(\lfloor kP \rfloor)) \longrightarrow  H^{0}(X, \OX(kL))
\end{equation*}
is an isomorphism.
The map is multipling the section $e_{k}$, 
where $e_{k}$ is the standard section of the effective divisor 
$\lceil kN \rceil$. Here $\lfloor G \rfloor$ 
(resp. $\lceil G \rceil$) denotes round down (resp. round up) of an $\mathbb{R}$-divisor G.
Let $V$ be an irreducible curve which is not contained in 
the augmented base locus $ {\mathbb{B}}_{+}(L)$. 
Then $L|_{V}$ is an ample line bundle. 
Further, the restricted volume along $V$ is 
computed by the self-intersection number of 
the positive part $P$ along $V$
when $L$ admits a Zariski decomposition 
(see \cite[Proposition 3.1]{Mat10}).
That is, 
\begin{align*}
& \mathrm{vol_{\it{X} | \it{V} }}(L) = (P \cdot V),  \\ 
& \mathrm{vol_{\it{V}}}(L) = 
(L \cdot V ) = (P \cdot V) + (N \cdot V).
\end{align*}
Therefore, the volume and restricted volume 
may be different value
unless 
$(N \cdot V)$ is not equal to zero.
When $L$ is not nef 
(that is, $N$ is non-zero divisor) 
and $V$ is an ample divisor, 
$(N \cdot V)$ is not zero.
\end{ex}


\end{document}